\newcommand{\rrvert}{\vert}
\newcommand{\llvert}{\vert}
\newcommand{\eqref}[1]{(\ref{#1})}
\def\cal{\mathcal}
\newtheorem{lemma}{Lemma}[section]
\newtheorem{theorem}[lemma]{Theorem}
\newtheorem{proposition}[lemma]{Proposition}
\newtheorem{propositionn}{Proposition}[section]
\newtheorem{corollary}[lemma]{Corollary}
\def\PP{\mathbb{P}}
\def\ve{{\varepsilon}}
\def\es{\varnothing}
\def\da{\downarrow}
\def\E{\mathbb{E}}
\def\Q{\mathbb{Q}}
\def\R{\mathbb{R}}
\def\N{\mathbb{N}}
\def\Z{\mathbb{Z}}
\def\D{{\Delta}}
\def\t{{\tau}}
\def\k{{\kappa}}
\def\g{{\gamma}}
\def\s{{\sigma}}
\def\l{{\lambda}}
\def\Th{{\Theta}}
\def\cS{{\cal S}}
\def\cL{{\cal L}}
\def\cR{{\cal R}}
\def\cE{{\cal E}}
\def\cF{{\cal F}}
\def\cD{{\cal D}}
\def\cZ{{\cal Z}}
\def\id{\operatorname{id}}
\def\ua{\uparrow}
\def\sse{\subseteq}
\def\sm{\setminus}
\begin{document}
\begin{frontmatter}

%\dochead{}
\title{Weak convergence of the localized disturbance flow to the
coalescing
Brownian flow}
\runtitle{Convergence of the disturbance flow}

\begin{aug}
\author[A]{\fnms{James} \snm{Norris}\ead[label=e1]{J.R.Norris@statslab.cam.ac.uk}\thanksref{T1}}
\and
\author[B]{\fnms{Amanda} \snm{Turner}\corref{}\ead[label=e2]{a.g.turner@lancaster.ac.uk}}
\thankstext{T1}{Supported by EPSRC Grant EP/103372X/1.}
\address[A]{Statistical Laboratory\\
Centre for Mathematical Sciences\\
University of Cambridge\\
Wilberforce Road\\
Cambridge, CB3 0WB\\
United Kingdom\\
\printead{e1}}
\address[B]{Department of Mathematics and Statistics\\
Lancaster University\\
Lancaster, LA1 4YF\\
United Kingdom\\
\printead{e2}}
\affiliation{University of Cambridge and Lancaster University}
\runauthor{J.~Norris and A.~Turner}
%\author[A]{\fnms{} \snm{}\corref{}}
%\and
%\author[B]{\fnms{} \snm{}}
%\runauthor{}
%\affiliation{}
%\dedicated{}
%\address[A]{} %adresu isvedimo komanda gale!
%\address[B]{}
\end{aug}

% HISTORY:
\received{\smonth{1} \syear{2013}}
\revised{\smonth{3} \syear{2013}}
%\accepted{\smonth{} \syear{}}

% ABSTRACT
%
\begin{abstract}
We define a new state-space for the coalescing Brownian flow, also
known as the Brownian web, on the circle.
The elements of this space are families of order-preserving maps of the circle,
depending continuously on two time parameters and having
a certain weak flow property.
The space is equipped with a complete separable metric.
A larger state-space, allowing jumps in time, is also introduced, and
equipped with a Skorokhod-type metric, also complete and separable.
We prove that the coalescing Brownian flow is the weak limit in this
larger space
of a family of flows which evolve by jumps, each jump arising from a
small localized disturbance of the circle.
A local version of this result is also obtained, in which the weak
limit law is that of the coalescing Brownian flow on the line.
Our set-up is well adapted to time-reversal and our weak limit result
provides a new proof of time-reversibility
of the coalescing Brownian flow. We also identify a martingale
associated with the coalescing Brownian flow on the circle
and use this to make a direct calculation of the Laplace transform of
the time to complete coalescence.
\end{abstract}

% KEYWORDS
% Pirmas kwd is didziosios raides
%
\begin{keyword}[class=AMS]
%\kwd[Primary ]{}
\kwd{60F17}
%\kwd[; secondary ]{}
\end{keyword}
\begin{keyword}
\kwd{Stochastic flow}
\kwd{coalescing Brownian motions}
\kwd{Brownian web}
\kwd{Arratia flow}
%\kwd{}
\end{keyword}

\end{frontmatter}

%s1 #&#
\section{Introduction}
This paper is a contribution to the theory of stochastic flows in one dimension.
The main result is Theorem~\ref{MAIN}. It establishes weak convergence
of a certain class of discrete-time stochastic flows on the circle,
which we call disturbance flows, to the coalescing Brownian flow.
This is motivated by a surprising connection with a model of Hastings
and Levitov \cite{HL}
for planar aggregation, which is worked out in our companion paper
\cite{NT2}.
In this model, the flow of harmonic measure on the cluster boundary is
a disturbance flow,
and our convergence theorem then shows that the random structure of fingers
in the Hasting--Levitov cluster is well described in the small-particle
limit by the coalescing Brownian flow.

A disturbance flow is a composition of independent and identically
distributed random maps of the circle to itself.
We do not assume that the maps are homeomorphisms, but do require that
they preserve order.
We consider the limit where the maps are close to the identity and are
well localized.
In this limit, we show that the trajectories of points in the flow
converge weakly to coalescing Brownian motions.
Further, we obtain a corresponding result at the level of flows.
In formulating this, we define some new metric spaces,
which we call the continuous weak flow space and the cadlag weak flow
space. These spaces have a number
of convenient properties, which we prove. In particular, the continuous
weak flow space provides a
state-space for the coalescing Brownian flow where its
independent-increment and reversibility properties
are transparently expressed. The cadlag weak flow space provides a good
framework for weak convergence of one-dimensional stochastic flows with jumps.

The coalescing Brownian flow is, loosely speaking, a family of one
dimensional Brownian motions, one for each space--time starting point,
which evolve independently up to collision and coalesce thereafter.
The possibility to identify a precise mathematical object corresponding
to this idea
was shown by Arratia in 1979 in his Ph.D. thesis \cite{A79}. Beginning
with Arratia, and more recently
pursued by Le Jan and Raimond \cite{LJR} and Tsirelson \cite
{Tsirelson}, one line of work has focused on the
possibility to define a family of random measurable functions $(\phi
_{ts}\dvtx s,t\in\R,s\le t)$, having the flow property
\[
\phi_{ts}\circ\phi_{sr}=\phi_{tr},\qquad r\le s\le
t
\]
and such that any finite collection of trajectories $(\phi
_{ts}(x)\dvtx t\ge
s)$ performs coalescing Brownian motions.
It is known that the functions $\phi_{ts}$ cannot be chosen to be
right-continuous (or left-continuous) and this presents an
obstacle in identifying a suitable metrizable state-space. A second
line of work, initiated by Fontes et al.~\cite{FINR}, overcomes this
difficulty by completing the set of trajectories to form a compact set
of continuous paths (for a well-chosen topology on paths). The space
of these compact sets of paths is then complete and separable for the
Hausdorff metric. Depending on exactly which completion
is chosen, this leads to a number of canonical versions of Arratia's
flow, known as Brownian webs.

In this paper, we follow the flow-type picture, but in order to
overcome the problem of having multiple choices for the value of $\phi
_{ts}(x)$ at points of discontinuity, we work instead with the pairs
$\{\phi^-,\phi^+\}$ of left-continuous and right-continuous
modifications of the Arratia flow. This is not far from the
viewpoint of T\'{o}th and Werner \cite{TW}, who however did not address
questions of weak convergence.
In forgetting the values of $\phi$ at jumps, our state-space becomes
less informative about path properties, but more regular.
We are obliged to relax the flow condition to a ``weak flow'' property
that we define in Section~\ref{CBF}, where we also show how to define a
suitable metric on this space. This gives us an alternative state-space
to~\cite{FINR}, where independent increment and time reversibility
properties are, we think, more naturally expressed; indeed,
time-reversal appears as an isometry. Moreover,
we have been able to develop a Skorokhod-type state-space for flows
which evolve by jumps. This then
dispenses with the need to embed jump flows in continuous flows by
interpolation.

We envisage that there are many natural stochastic flow processes,
which have jumps in time for which continuous interpolation may be
problematic. Our topology provides a convenient framework in which to
characterize these processes and study convergence.
A limitation of our framework is that it requires the flows to have
noncrossing trajectories. In addition, in
our formulation, one does not see so clearly the possible varieties of
path. In models where these properties are important, the topology in
\cite{FINR} may be more appropriate, however, this needs to be weighed
against the complications that may arise from the interpolation process.
An early version of some parts of the present paper, along with its
companion paper \cite{NT2}, appeared in \cite{NT}. A discussion on the
relation between our work and the well-established framework from \cite
{FINR} can also be found in this paper.

The paper is organized as follows.
In Section~\ref{LFC}, we introduce disturbance flows, and we prove weak
convergence
for the trajectories from countably many points, in the limit as the
disturbances become small and well-localized.
In Section~\ref{CBF}, we define the continuous weak flow space and show
that it provides a canonical space
for the coalescing Brownian flow.
Section~\ref{CCTI} is a short digression on the distribution of the
time taken for the coalescing Brownian flow on the circle
to coalesce completely.
In Section~\ref{SKOR}, the larger, cadlag weak flow space, of
Skorokhod type,
is introduced.
The convergence of the disturbance flow to the coalescing Brownian flow
is shown in Section~\ref{MR}. In Section~\ref{TR}, we take advantage of
the approximation
by disturbance flows to give a new proof of the time-reversibility of
the coalescing Brownian flow.
We prove in Section~\ref{LL} a local limit for scales intermediate
between the disturbance and the whole circle, the
limit object being the coalescing Brownian flow on the line. The more
technical proofs can be found in the \hyperref[app]{Appendix}, and a
list of notation
is provided at the end of the paper.

%s2 #&#
\section{The disturbance flow on the circle}\label{LFC}
We introduce a class of random flows on the circle, whose distributions
are invariant under rotations of the circle and under which each point
on the circle performs a random walk.
The flow maps are in general not continuous on the circle but have an
order-preserving property. In a certain asymptotic regime, the
motion of the flow from a countable family of starting points
is shown to converge weakly to a family of
coalescing Brownian motions.

We specify a particular flow by the choice of a nondecreasing,
right-continuous function $f^+\dvtx\R\to\R$ with the following
\emph{degree}
$1$ property\setcounter{footnote}{1}\footnote{These functions can be considered as liftings of
maps from the circle
$\R/\Z$ to itself having an order-preserving property.
In the limiting regime which we consider, the circle map is a
perturbation of the identity map and
our basic map $f^+$ is the unique lifting which is close to the
identity map on~$\R$.}
%
%e1 #&#
%
\begin{equation}
\label{circdef} f^+(x+1) = f^+(x) + 1,\qquad  x\in\R.
\end{equation}
Denote the set of such functions by $\cR$ and write $\cL$ for the
analogous set of left-continuous functions.
Each $f^+\in\cR$ has a left-continuous modification $f^-\in\cL$, given
by $f^-(x)=\lim_{y\ua x}f^+(y)$.
Write $\cD$ for the set of all pairs $f=\{f^-,f^+\}$.
When $f^+$ is continuous, we also write $f=f^+$ and, generally, we
write $f$ in place of $f^\pm$
in expressions where the choice of left or right-continuous
modification makes no difference to the value.
The sets $\cR$ and $\cL$ are closed under composition, but $\cD$ is not.
In fact, if $f_1,f_2\in\cD$, then $f_2^-\circ f_1^-$ is the
left-continuous modification of $f_2^+\circ f_1^+$
if and only if $f_1$ sends no interval of positive length to a point of
discontinuity of $f_2$.
We say in this case that $f_2\circ f_1\in\cD$, denoting by $f_2\circ
f_1$ the pair $\{f_2^-\circ f_1^-,f_2^+\circ f_1^+\}$.
Write $\tilde f^\pm$ for the periodic functions $\tilde f^\pm
(x)=f^\pm(x)-x$.
Define $\id(x)=x$ and set
\[
\cD^*= \biggl\{f\in\cD\sm\{\id\}\dvtx\int_0^1
\tilde{f}(x) \,dx=0 \biggr\}.
\]
We assume throughout that our basic map $f\in\cD^*$.

Let us suppose we are given a sequence $(\Th_n\dvtx n\in\Z)$ of independent
random variables,
all distributed uniformly on $(0,1]$.
For $f\in\cD^*$ and ${\theta}\in(0,1]$, define $f_{\theta
}(x)=f(x-{\theta})+{\theta}$. Then
define, for $m,n\in\Z$ with $m<n$,
%
%e2 #&#
%
\begin{equation}
\label{phiintdef} \Phi_{n,m}^\pm=f_{\Th_n}^\pm
\circ\cdots\circ f_{\Th_{m+1}}^\pm.
\end{equation}
Set $\Phi_{n,n}=\id$ for all $n\in\Z$.
Thus, for $l\le m\le n$, we have $\Phi^\pm_{n,l}=\Phi^\pm
_{n,m}\circ\Phi
^\pm_{m,l}$.
Since $f$ can have at most countably many points of discontinuity and
intervals of
constancy, we have $\Phi_{n,m}=\{\Phi_{n,m}^-,\Phi_{n,m}^+\}\in\cD$
almost surely.
We call the function $f$ the \emph{disturbance} and we call $(\Phi
_{n,m}\dvtx m,n\in\Z,m\le n)$
the \emph{discrete disturbance flow}.\footnote{In the case where $f$
is a
homeomorphism, the
restriction of the flow to $m,n\ge0$ can be recovered from the process
$(\Phi_{n,0}\dvtx n\ge0)$.
This is a random walk on the group of homeomorphisms of the circle. The
structure of this
group is a rich area of mathematics. See, for example, \cite
{MR2903755,MR1876932,MR1713340,MR2084334}. The present paper can be
seen as an investigation
of scaling limits for such random walks with small localized steps.
Our conclusion is then that one has to complete
the homeomorphism group to the space of weak flows in order to support
the limit measure,
and then that, within the class we consider, the limit is universal.}
Define $\rho=\rho(f)\in(0,\infty)$ by
%
%e3 #&#
%
\begin{equation}
\label{rhodef} \rho\int_0^1
\tilde{f}(x)^2 \,dx=1.
\end{equation}
We embed the discrete-time flow in continuous-time using a Poisson
random measure $N$ on $\R$ of intensity $\rho$.
Write $(T_n\dvtx n\in\Z)$ for the ordered sequence of atoms of $N$, labeled
so that $T_0\le0<T_1$. Then, for each
bounded interval $I\sse\R$, set $\Phi_I=\id$ if $N(I)=0$, and
otherwise set
\[
\Phi_I=\Phi_{n,m},
\]
where $T_{m+1}$ and $T_n$ are the smallest and largest atoms of $N$ in $I$.
Write $\Phi=(\Phi_I\dvtx I\sse\R)$ for the family of maps $\Phi_I$
where $I$
ranges over all bounded intervals in $\R$.
We call $\Phi$ the \emph{Poisson disturbance flow with disturbance $f$}.
A second embedding in continuous time, without additional randomness,
will also be considered.
By the \emph{lattice disturbance flow with disturbance $f$,} we mean the
family $(\Phi_I\dvtx I\sse\R)$, where $\Phi_I=\id$ if $\rho I\cap
\Z=\es$ and
otherwise
$\Phi_I=\Phi_{n,m}$ with $m+1$ the smallest integer and $n$ the largest
integer in the interval $\rho I$.
In each embedding, the time-scale has been chosen to normalize the mean
square displacement per unit time.
Unless otherwise mentioned, our discussion refers to the Poisson case,
which is slightly cleaner, but the
variations needed for the lattice case are slight and we shall end up
with the same asymptotic results in both cases.

Write $I=I_1\oplus I_2$ if $I_1, I_2$ and $I$ are intervals with $\sup
I_1=\inf I_2$, $I_1\cap I_2=\es$ and $I_1\cup I_2=I$.
Note that $\Phi$ has the following properties:
%
%e4 #&#
%e5 #&#
%e6 #&#
%
\begin{eqnarray}\label{LFM}
\begin{tabular}{p{300pt}@{}}
$\Phi^+_I(x)$ and $\Phi^-_I(x)$
are random variables for all bounded intervals $I$ and all $x\in\R$,
\end{tabular}\vspace*{-9pt}
\end{eqnarray}
\begin{equation}
\label{LWF} \Phi_I^+=\Phi_{I_2}^+\circ
\Phi_{I_1}^+\quad\mbox{and\quad $\Phi_I^-=\Phi_{I_2}^-
\circ\Phi_{I_1}^-$\qquad whenever $I=I_1\oplus I_2$},
\end{equation}\vspace*{-9pt}
\begin{eqnarray}\label{LPC}
\begin{tabular}{p{300pt}@{}}
for all $t\in\R$ there exists ${\delta}>0$ such
that for all $s
\in(t-{\delta},t)$
and all $u\in(t,t+{\delta})$, $\Phi_{(s,t)}=
\Phi_{(t,u)}=\id$.
\end{tabular}
\end{eqnarray}
For $e=(s,x)\in\R^2$ and $t\in[s,\infty)$, set
\[
X_t^{e,\pm}=\Phi^\pm_{(s,t]}(x).
\]
For each $e$, almost surely,
%
%e7 #&#
%
\begin{equation}
\label{XPME} X^{e,-}_t=X^{e,+}_t\qquad
\mbox{for all } t\ge s.
\end{equation}
We will therefore drop the $\pm$ and write simply $X^e=(X^e_t\dvtx
t\ge s)$.
We call $X^e$ the \emph{trajectory of the flow starting from $e$}.
The $\pm$ will reappear in any statement requiring specification of a
version of $X^e$ for uncountably many $e$.
Write $\mu^f_e$ for the distribution of $X^e$ on the Skorokhod space
$D_e=D_x([s,\infty),\R)$ of cadlag paths starting from $x$ at time $s$.
Write $d_e$ for the Skorokhod metric on $D_e$ and
write $\mu_e$ for the distribution on $D_e$ of a standard Brownian
motion starting from $e$.

%pr2.1 #&#
%
\begin{proposition}\label{LBM}
The trajectory $X^e$ of the Poisson disturbance flow with disturbance
$f$ converges
weakly to Brownian motion on $D_e$, uniformly in $f\in\cD^*$ as $\rho
(f)\to\infty$.
\end{proposition}

\begin{pf}
Write $X$ for $X^e$ within the proof to lighten the notation.
Note that $X$ is a compound Poisson process, making jumps distributed
as $\tilde f(\Th_1)$ at rate $\rho$.
So, for $t\ge s$,
\begin{eqnarray*}
\E(X_t-X_s)&=&\rho(t-s)\int_0^1
\tilde f({\theta})\,d{\theta}=0,
\\
\E\bigl((X_t-X_s)^2\bigr)&=&\rho(t-s)\int
_0^1\tilde f({\theta})^2\,d{\theta}=t-s.
\end{eqnarray*}
Hence, the processes $(X_t)_{t\ge s}$ and $(X_t^2-t)_{t\ge s}$ are martingales.
A standard criterion (see, e.g., \cite{B}, page 143 or \cite
{MR1943877}, page
355) allows us
to deduce that the family of laws $\{\mu^f_e\dvtx f\in\cD^*\}$ is
tight in $D_e$.
Now $f$ is nondecreasing so
\[
\tilde f({\theta})\ge\tilde f({\theta}_0)-({\theta}-{\theta}_0),\qquad
{\theta}\ge
{\theta}_0
\]
and so, if $\tilde f({\theta}_0)\ge0$ for some ${\theta}_0$, then
\[
\rho^{-1}=\int_0^1\tilde f(
{\theta})^2\,d{\theta}\ge\int_{{\theta}_0}^{{\theta}_0+\tilde
f({\theta}
_0)}\bigl(\tilde
f({\theta}_0)-({\theta}-{\theta}_0)\bigr)^2\,d{\theta}=\bigl|\tilde f(
{\theta}_0)\bigr|^3/3
\]
and a similar argument leads to the same estimate also when $\tilde
f({\theta}_0)\le0$.
Hence,
%
%e8 #&#
%
\begin{equation}
\label{FR3} \bigl|\tilde f({\theta})\bigr|\le(3/\rho)^{1/3},\qquad {\theta}\in(0,1].
\end{equation}
So the jumps of $(X_t)_{t\ge s}$ are bounded in absolute value by
$(3/\rho)^{1/3}$.
Let $\mu$ be any weak limit law for the limit $\rho(f)\to\infty$.
Write $(Z_t)_{t\ge s}$ for the coordinate process on $D_e$.
Then, by standard arguments, $\mu$ is supported on continuous paths and
under $\mu$
both $(Z_t)_{t\ge s}$ and $(Z_t^2-t)_{t\ge s}$
are local martingales in the natural filtration of $(Z_t)_{t\ge s}$.
Hence $\mu=\mu_e$ by L\'evy's characterization of Brownian motion.
\end{pf}

Given a sequence $E=(e_k\dvtx k\in\N)$ in $\R^2$, set
\[
D_E=\prod_{k=1}^\infty
D_{e_k}
\]
and define a metric $d_E$ on $D_E$ by
%
%e9 #&#
%
\begin{equation}
\label{DEmetric}\quad d_E\bigl(z,z'\bigr)=\sum
_{k=1}^\infty2^{-k}\bigl(d_{e_k}
\bigl(z_k,z'_k\bigr)\wedge1\bigr),\qquad
z=(z_k\dvtx k\in\N), z'=\bigl(z_k'
\dvtx k\in\N\bigr).
\end{equation}
Then $(D_E,d_E)$ is a complete separable metric space and
$(X^{e_k}\dvtx k\in
\N)$ is a random variable in $D_E$.
Write $\mu^f_E$ for the distribution of $(X^{e_k}\dvtx k\in\N)$ on $D_E$.

Write $e_k=(s_k,x_k)$ and denote by $(Z^k_t)_{t\ge s_k}$ the $k$th
coordinate process on $D_E$, given by $Z_t^k(z)=z^k_t$.
Consider the filtration $(\cZ_t)_{t\in\R}$ on $D_E$,
where $\cZ_t$ is the $\s$-algebra generated by $(Z^k_s\dvtx s_k<s\le
t\vee
s_k, k\in\N)$.
Write $C_E$ for the (measurable) subset of $D_E$ where each coordinate
path is continuous.
Define on $C_E$
\[
T^{jk}=\inf\bigl\{t\ge s_j\vee s_k\dvtx
Z_t^j-Z_t^k\in\Z\bigr\}.
\]
We sometimes think of the paths $(Z^k_t)_{t\ge s_k}$ as liftings of
paths in the circle $\R/\Z$. Then
the times $T^{jk}$ are collision times of the circle-valued paths.
The following is a variant of a result of Arratia \cite{A79}.
It provides a useful martingale characterization corresponding to the
intuitive idea of coalescing Brownian motions on the circle.

%pr2.2 #&#
%
\begin{proposition}\label{ARRA}
There exists a unique Borel probability measure $\mu_E$
on $D_E$ under which, for all $j,k$, the processes
$(Z^k_t)_{t\ge s_k}$ and $(Z^j_tZ^k_t-(t-T^{jk})^+)_{t\ge s_j\vee s_k}$ are
both continuous local martingales in the filtration $(\cZ_t)_{t\in\R}$.
\end{proposition}

We sketch a proof. For existence, one can take independent
Brownian motions from each of the given time--space starting points
and then impose a rule of coalescence on collision, deleting the path
of lower index.
The law of the resulting process has the desired properties. On the
other hand, given a probability
measure such as described in the proposition, on some larger
probability space, one can use a supply of independent Brownian motions
to resurrect the paths deleted at each collision. Then L\'evy's
characterization can be used to see that one has recovered
the set-up used for existence. This gives uniqueness.

Consider now a limit in which the basic map $f$ is an increasingly well
localized perturbation of
the identity, where we quantify this property in terms of the smallest constant
$\l=\l(f)\in(0,1]$ such that
%
%e10 #&#
%
\begin{equation}
\label{lambdadef} \rho\int_0^1\bigl|\tilde f(x+a)
\tilde f(x)\bigr|\,dx\le\l,\qquad  a\in[\l,1-\l].
\end{equation}

%pr2.3 #&#
%
\begin{proposition}\label{LAF}
The joint distribution $\mu_E^f$ of the family of trajectories
$(X^e\dvtx e\in E)$ in the Poisson disturbance flow with disturbance $f$
converges weakly to the coalescing Brownian law $\mu_E$
on $D_E$, uniformly in $f\in\cD^*$, as $\rho(f)\to\infty$ and $\l
(f)\to0$.
\end{proposition}

\begin{pf}
We write $X^k$ for $X^{e_k}$ within the proof.
For each $k$, the family of marginal laws $\{\mu^f_{e_k}\dvtx f\in\cD
^*\}$
is tight, as
in Proposition~\ref{LBM}.
Hence, the family of laws $\{\mu^f_E\dvtx f\in\cD^*\}$ is also tight.
Let $\mu$ be any weak limit law for $\{\mu^f_E\dvtx f\in\cD^*\}$
under the
limits $\rho=\rho(f)\to\infty$ and $\l=\l(f)\to0$. Then $\mu$ is
supported on $C_E$.
For all $j,k$ the process
\[
X^j_tX^k_t-\int
_{s_j\vee s_k}^tb\bigl(X^j_s,X^k_s
\bigr)\,ds, \qquad t\ge s_j\vee s_k,
\]
is a martingale,\footnote{In the lattice case, a similar argument can
be based on the martingale
\[
X_t^jX_t^k-\frac{1}\rho
\sum_{n=\lfloor\rho(s_j\vee s_k)\rfloor}^{\lfloor
\rho t\rfloor-1} b\bigl(X^j_{n/\rho},X^k_{n/\rho}
\bigr),\qquad t\ge s_j\vee s_k,
\]
}
where
\[
b\bigl(x,x'\bigr)=\rho\int_0^1
\tilde f(x-{\theta})\tilde f\bigl(x'-{\theta}\bigr)\,d{\theta}.
\]
We have $|b(x,x')|\le\l$ whenever $\l\le|x-x'|\le1-\l$. Hence, by standard
arguments, under $\mu$, the process $(Z^j_tZ^k_t\dvtx s_j\vee s_k\le
t<T^{jk})$ is a local
martingale. We know from the proof of Proposition~\ref{LBM} that, under
$\mu$, the processes
$(Z^j_t\dvtx t\ge s_j)$, $((Z^j_t)^2-t\dvtx t\ge s_j)$ and $(Z^k_t\dvtx
t\ge s_k)$ are
continuous local martingales.
But $\mu$ inherits from the laws $\mu^f_E$ the property
that, almost surely, for all $n\in\Z$, the process
$(Z^j_t-Z^k_t+n\dvtx t\ge
s_j\vee s_k)$ does not change sign.
Hence, by an optional stopping argument, $Z^j_t-Z^k_t$ is constant for
$t\ge T^{jk}$.
It follows that $(Z^j_tZ^k_t-(t-T^{jk})^+)_{t\ge s_j\vee s_k}$ is a
continuous local martingale.
Hence, $\mu=\mu_E$, by Proposition~\ref{ARRA}.
\end{pf}

%s3 #&#
\section{A new state-space for the coalescing Brownian flow}
\label{CBF}
The weak convergence result for trajectories, obtained in
Proposition~\ref{LAF}, suggests the possibility
of a deeper result at the level of flows, independent of the choice of
starting points for trajectories.
This would be of interest to understand what statistics of the
disturbance flows, beyond trajectories, have weak limits, for
example, trajectories of the inverse, reverse-time flow.
For such a flow-level result, we first specify a state-space and metric
for the notion of weak convergence, and then identify a limit object,
which we call the coalescing Brownian flow.

%f1 #&#
%
\begin{figure}[t]

\includegraphics{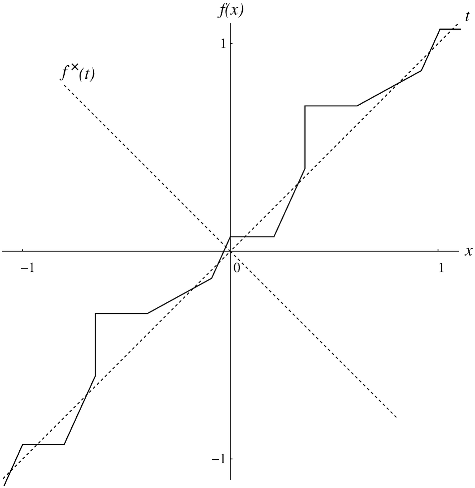}

\caption{The map $f^\times$ obtained from $f$ by rotating
the axes by $\frac{\pi}{4}$.}\label{Phimap}
\end{figure}

We begin by defining a metric on $\cD$.
Let $\cS$ denote the set of all periodic contractions on $\mathbb{R}$
having period 1.
Each $f\in\cD$ can be identified with some $f^\times\in\cS$ by
drawing new
axes at an angle $\pi/4$ with the old, and scaling
appropriately. See Figure~\ref{Phimap}.
More formally, since $x+f^+(x)$ is strictly increasing in $x$, there is
for each $t\in\R$
a unique $x\in\R$ such that
%
%e11 #&#
%
\begin{equation}
\label{fcrossdef} \frac{x+f^-(x)}2\le t\le\frac{x+f^+(x)}2.
\end{equation}
Define $f^\times(t)=t-x$. Note that $\id^\times=0$.
Then \emph{the map $f\mapsto f^\times\dvtx\cD\to\cS$ is a bijection},
so we can define a metric $d_\cD$ on $\cD$ by
%
%e12 #&#
%
\begin{equation}
\label{distdist} d_\cD(f,g)=\bigl\|f^\times-g^\times\bigr\|=
\sup_{t\in[0,1)}\bigl|f^\times(t)-g^\times(t)\bigr|.
\end{equation}
A proof of the italicized assertion is given in the \hyperref
[app]{Appendix}. The same
is true for some
further technical assertions which will be made below, written also in italics.
The metric space $(\cS,\|\cdots\|)$ is complete and locally compact,
so the same is true for $(\cD,d_\cD)$.
An alternative characterization\footnote{Thus, $d_\cD$ is a close relative
of the L\'evy metric sometimes used on the set of distribution
functions for real random
variables. This choice of topology is insensitive to the value of a
function at its jump discontinuities, only keeping track of its left
and right continuous versions. The relationships of such a metric to
the operations of composition and inversion
in $\cD$, which are significant for us, do not appear to have been studied.}
of the metric $d_\cD$ is as follows: \emph{for $f,g\in\cD$ and $\ve>0$,
we have}
\begin{eqnarray}
d_\cD(f,g)\le\ve\quad\iff\quad f^-(x-\ve)-\ve\le g^-(x)\le g^+(x) \le f^+(x+
\ve)+\ve
\nonumber\\
\eqntext{\mbox{for all }x\in\R.}
\end{eqnarray}
We deduce that, for $f,g\in\cD$,
\[
d_\cD(f,g)\le\|f-g\|, \qquad 2d_\cD(f,\id)=\|f-\id\|
\]
and
\begin{eqnarray*}
d_\cD(f,g\circ f)&\le&\|g-\id\|\qquad\mbox{when $g\circ f\in\cD$},
\\
d_\cD(f,f\circ g)&\le&\|g-\id\|\qquad\mbox{when $f\circ g\in\cD$}.
\end{eqnarray*}
Moreover, \emph{for any sequence $(f_n\dvtx n\in\N)$ in $\cD$},
\[
f_n\to f\quad \iff\quad  f_n(x)\to f(x)\qquad\mbox{at every point $x$
where $f$ is continuous}.
\]
Here and below, we write $f_n\to f$ to mean convergence in the metric
$d_\cD$.

We now define our space of flows. We call them weak flows to emphasize
that the
usual flow property may fail at points of spatial discontinuity.
Consider $\phi=(\phi_{ts}\dvtx s,t\in\R,s<t)$, with $\phi_{ts}\in
\cD$ for
all $s,t$.
Say that $\phi$ is a \emph{weak flow} if
%
%e13 #&#
%
\begin{equation}
\label{WFcont} \phi_{ut}^-\circ\phi_{ts}^-\le
\phi_{us}^-\le\phi_{us}^+\le\phi_{ut}^+\circ
\phi_{ts}^+, \qquad s<t<u.
\end{equation}
Say that $\phi$ is \emph{continuous} if, for all $t\in\R$,
\[
\phi_{ts}\to\id\qquad \mbox{as $s\ua t$},\qquad \phi_{ut}\to\id\qquad\mbox{as $u\da t$}.
\]
Write $C^\circ(\R,\cD)$ for the set of all continuous weak flows.
It will be convenient sometimes to extend a continuous weak flow $\phi$
to the diagonal, which we do
by setting $\phi_{ss}=\id$ for all $s\in\R$.
Then, \emph{for any $\phi\in C^\circ(\R,\cD)$, the map}
%
%e14 #&#
%
\begin{equation}
\label{PHIC} (s,t)\mapsto\phi_{ts}\dvtx\bigl\{(s,t)\dvtx s\le t
\bigr\}\to\cD
\end{equation}
\emph{is continuous}.

Define, for $\phi,\psi\in C^\circ(\R,\cD)$,
%
%e15 #&#
%
\begin{equation}
\label{dcdist} d_C(\phi,\psi)=\sum_{n=1}^\infty2^{-n}
\bigl\{d_C^{(n)}(\phi,\psi)\wedge1\bigr\},
\end{equation}
where
%
%e16 #&#
%
\begin{equation}
\label{dcndist} d_C^{(n)}(\phi,\psi)=\sup
_{s,t\in(-n,n),s<t}d_{\cD}(\phi_{ts},
\psi_{ts}).
\end{equation}
Then $d_C$ is a metric on $C^\circ(\R,\cD)$, \emph{under which
$C^\circ
(\R,\cD)$ is
complete and separable}.
Define, for $e=(s,x)\in\R^2$ and $t\ge s$, \emph{evaluation maps}
$Z^{e,+}_t$ and $Z^{e,-}_t$ on $C^\circ(\R,\cD)$ by
\[
Z^{e,\pm}_t(\phi)=\phi_{ts}^\pm(x).
\]
Then, \emph{for all $\phi\in C^\circ(\R,\cD)$, the maps $t\mapsto
Z^{e,\pm}_t(\phi)\dvtx[s,\infty)\to\R$
are continuous.} So we can consider the \emph{left and right coordinate
processes} $Z^{e,\pm}=(Z_t^{e,\pm}\dvtx t\ge s)$ as $C_e$-valued
random variables
on $C^\circ(\R,\cD)$. Write $Z^e=Z^{e,+}$ to lighten the notation.
Define a $\s$-algebra $\cF$ and a filtration $(\cF_t)_{t\in\R}$ on
$C^\circ(\R,\cD)$ by
\[
\cF=\s\bigl(Z^e_t\dvtx e\in\R^2,t\ge s(e)
\bigr),\qquad \cF_t=\s\bigl(Z^e_r\dvtx e\in
\R^2,r\in(-\infty,t]\cap\bigl[s(e),\infty\bigr)\bigr),
\]
where $s(e)$ is the first component of $e$. Then \emph{$\cF_t$ is
generated by the random variables $Z^e_r$ with $e\in\Q^2$ and $r\in
(-\infty,t]\cap[s(e),\infty)$,
and $\cF$ is the Borel $\s$-algebra of the metric $d_C$.}
Define for $e=(s,x)$ and $e'=(s',x')$ the \emph{collision time}
$T^{ee'}\dvtx C^\circ(\R,\cD)\to[0,\infty]$ by
\[
T^{ee'}(\phi)=\inf\bigl\{t\ge s\vee s'\dvtx
Z^e_t(\phi)-Z^{e'}_t(\phi)\in\Z
\bigr\}.
\]

The following result is a variant, stated in the language of continuous
weak flows, of a result of T\'oth and Werner \cite{TW}, Theorem~2.1,
which itself was a variant of a result of Arratia \cite{A79}. The
characterizing martingale properties may be expressed less formally
as saying that there exists a unique probability measure on $C^\circ
(\R
,\cD)$ under which the left and right coordinate processes $Z^{e,\pm}$
agree almost surely for all $e\in\R^2$
and behave as Brownian motions coalescing on the circle.
We shall give a complete proof, in part because we need most components
of the proof also for our main convergence result,
and in part because our framework leads to some simplifications, for
example in the probabilistic underpinnings contained in
Proposition~\ref{MPI}.
The formulation in terms of continuous weak flows has advantages in
leading to a unique object, with a natural time-reversal invariance
(for which see Section~\ref{TR}), and for the derivation of weak limits
(see Section~\ref{MR}).

%th3.1 #&#
%
\begin{theorem}\label{UBPR}
There exists a unique Borel probability measure $\mu_A$ on $C^\circ
(\R
,\cD)$ under which,
for all $e,e'\in\R^2$, the processes $(Z^e_t)_{t\ge s(e)}$ and
$(Z^e_tZ_t^{e'}-(t-T^{ee'})^+)_{t\ge s(e)\vee s(e')}$
are continuous local martingales for $(\cF_t)_{t\in\R}$.
Moreover, for all $e\in\R^2$, we have $Z^{e,+}=Z^{e,-}$ $\mu
_A$-almost surely.
\end{theorem}

\begin{pf}
We first show that there exists a unique probability measure $\mu_A$ on
$C^\circ(\R,\cD)$ under which the above property holds for all $e,e'
\in\Q^2$. This essentially amounts to showing that if we have a family
of coalescing Brownian motions starting from every point in $\Q^2$,
then there exists a unique continuous weak flow under which the motions
of each point in $\Q^2$ are the given coalescing Brownian motions.

Fix an enumeration $E=(e_k\dvtx k\in\N)$ of $\Q^2$.
Define the evaluation map $Z^{E,\pm}\dvtx C^\circ(\R,\cD)\to C_E$ by
$Z^{E,\pm}(\phi)=(Z^{e_k,\pm}(\phi)\dvtx k\in\N)$.
Then, \emph{we have $\cF_t=\{(Z^{E,+})^{-1}(B)\dvtx B\in\cZ_t\}$},
where $(\cZ
_t)_{t \in\R}$ is the filtration on $C_E$ generated by projection
mappings as in Proposition~\ref{ARRA}. Therefore, if $\mu$ is any
probability measure on $C^\circ(\R,\cD)$ with the property that for
all $j,k\in\N$, the processes $(Z^{e_k}_t)_{t\ge s_k}$ and
$(Z^{e_j}_tZ_t^{e_k}-(t-T^{e_je_k})^+)_{t\ge s_j\vee s_k}$ are
continuous local martingales for $(\cF_t)_{t\in\R}$; then by
Proposition~\ref{ARRA}, $\mu\circ(Z^{E,+})^{-1}=\mu_E$.

To show existence and uniqueness, it is therefore sufficient to show
that $Z^{E,+}$ is bijective, or rather that there exists some $\mu
_E$-almost sure subset on which $Z^{E,+}$ is bijective.
Let the images of the evaluation maps be
\[
C^{\circ,\pm}_E=\bigl\{Z^{E,\pm}(\phi)\dvtx\phi\in
C^\circ(\R,\cD)\bigr\}.
\]
Then \emph{the sets $C^{\circ,\pm}_E$ are measurable subsets of $C_E$
with $\mu_E(C^{\circ,\pm}_E)=1$.}
Moreover, \emph{$Z^{E,\pm}$ maps $C^\circ(\R,\cD)$ bijectively to
$C^{\circ,\pm}_E$
and the inverse bijections $C^{\circ,\pm}_E\to C^\circ(\R,\cD)$},
which we denote by $\Phi^{E,\pm}$, \emph{are measurable}.
Write $Z^E$ for $Z^{E,+}$ and $\Phi^E$ for $\Phi^{E,+}$.
Then, on $C_E^{\circ,+}$, for all $j,k\in\N$, we have
\[
Z^{e_k}\circ\Phi^E=Z^k,\qquad T^{e_je_k}\circ
\Phi^E=T^{jk},
\]
where $Z^k$ and $T^{jk}$ are the projections and stopping times from
Proposition~\ref{ARRA}, and for all $t\in\R$ and $B\in\cF_t$ we have
$1_B\circ\Phi^E=1_{B'}$ for some $B'\in\cZ_t$.
Thus, we can uniquely define $\mu_A=\mu_E\circ(\Phi^E)^{-1}$ as required.

To complete the proof, we need to show that $\mu_A$ has the required
properties for any given $e,e'\in\R^2$. Observe that all the assertions
above hold also when $E$ is replaced by the sequence
$E'=(e,e',e_1,e_2,\ldots)$.
We repeat the steps taken to obtain a probability measure $\mu'_A=\mu
_{E'}\circ(\Phi^{E'})^{-1}$ on $C^\circ(\R,\cD)$.
Then, under $\mu'_A$, the processes $(Z^e_t)_{t\ge s(e)}$ and
$(Z^e_tZ_t^{e'}-(t-T^{ee'})^+)_{t\ge s(e)\vee s(e')}$
are continuous local martingales for $(\cF_t)_{t\in\R}$.
But also, under $\mu'_A$,
for all $j,k\in\N$, the processes $(Z^{e_k}_t)_{t\ge s_k}$ and
$(Z^{e_j}_tZ_t^{e_k}-(t-T^{e_je_k})^+)_{t\ge s_j\vee s_k}$
are continuous local martingales for $(\cF_t)_{t\in\R}$, so $\mu
_A=\mu'_A$.

Finally, \emph{we have $\Phi^{E',+}=\Phi^{E',-}$ on $C_{E'}^{\circ
,-}\cap C_{E'}^{\circ,+}$}, so
\[
Z^{e,-}\bigl(\Phi^{E'}\bigr)=Z^{e,-}\bigl(
\Phi^{E',-}\bigr)=Z^{e,+}\bigl(\Phi^{E'}\bigr),
\]
$\mu_{E'}$-almost surely,
and so $Z^{e,-}=Z^{e,+}$, $\mu_A$-almost surely, as claimed.
\end{pf}

We call any $C^\circ(\R,\cD)$-valued random variable with law $\mu
_A$ a
\emph{coalescing Brownian flow on the circle}.

%s4 #&#
\section{Complete coalescence time}\label{CCTI}
In this section, we digress to discuss the complete coalescence time $T$
of a coalescing Brownian flow $\Phi$ on the circle, given by
\[
T=\inf\bigl\{t\ge0\dvtx\Phi^+_{t0}(x)=y+n\mbox{ for some $n\in\Z$,
for all $x\in\R$, for some $y\in\R$}\bigr\}.
\]
It is known that
%
%e17 #&#
%
\begin{equation}
\label{CCT} \E\bigl(e^{\l T}\bigr)=\sqrt{\l}/\sin\sqrt{\l},\qquad \l<
\pi^2.
\end{equation}
Cox \cite{MR1048930} showed this by an indirect argument. More
recently, Zhou \cite{Zhou} gave a direct proof.
We give an alternative and simpler proof.

Fix $N\in\N$ and define for $t\ge0$
\[
B^k_t=\Phi_{t0}(k/N)-\Phi_{t0}
\bigl((k-1)/N\bigr),\qquad  k=1,\ldots,N.
\]
Then each process $B^k$ is a Brownian motion of diffusivity $2$,
starting from $1/N$ and stopped on hitting $0$ or $1$.
Consider the stopping time $S=\inf\{t\ge0\dvtx B_t^k=1\mbox{ for
some $k$}\}
$ and note
that $B^k_S=0$ for all but one random value, $k=K$ say, for which $B^K_S=1$.
Define
\[
M_t=M_t^{(N)}=e^{\l t}\sum
_{k=1}^N\sin\bigl\{\sqrt{\l}B_t^k
\bigr\}
\]
then the stopped process $(M_t^S)_{t \geq0} = (M_{S \wedge t})_{t \geq
0}$ is a martingale so, for all $t\ge0$,
\begin{eqnarray*}
N\sin\{\sqrt{\l}/N \}&=&M_0
\\
&=&\E(M_{S\wedge t})
\\
&=&\E\Biggl(e^{\l(S\wedge t)}\sum_{k=1}^N
\sin\bigl\{\sqrt{\l}B^k_{S\wedge t} \bigr\} \Biggr)
\\
&\ge&\E\bigl(e^{\l(S\wedge t)}\bigr)\sin\sqrt{\l}.
\end{eqnarray*}
For $\l<\pi^2$ the final inequality allows us to see that $\E(e^{\l
S})<\infty$, so we can let $t\to\infty$
to obtain
\[
N\sin\{\sqrt{\l}/N\}=\E\bigl(e^{\l S}\bigr)\sin\sqrt{\l}.
\]
On letting $N\to\infty$, we obtain (\ref{CCT}).

In fact, it is not hard to see that $M^{(N)}_t$ increases with $N$ for
all $t\ge0$
and is eventually constant for all $t>0$. The limit process $M^{(\infty
)}$ is also a martingale
with $M^{(\infty)}_0=\sqrt{\l}$ and $M^{(\infty)}_T=e^{\l T}\sin
\sqrt{\l
}$, and the optional stopping
argument can alternatively be applied directly to $M^{(\infty)}$.

From (\ref{CCT}), we can identify $T$ as having the same law as
one-half of the time $\tilde T$ taken
for a BES($3$) to get from $0$ to $1$. This can also be seen directly
using the relation
\[
S=\sum_{k=1}^NS_k1_{\{B^k(S_k)=1\}},
\]
where $S_k=\inf\{t\ge0\dvtx B_t^k\in\{0,1\}\}$. Then, for any bounded
measurable function $f$,
\[
\E\bigl(f(S)\bigr)=\sum_{k=1}^N\E
\bigl(f(S_k)1_{\{B^k(S_k)=1\}}\bigr)=\E\bigl(f(S_1)|B^1(S_1)=1
\bigr)
\]
and, on letting $N\to\infty$, we obtain $\E(f(T))=\E(f(\tilde T/2))$.
We thank Neil O'Connell and Marc Yor
for this observation.

%s5 #&#
\section{A Skorokhod-type space of nondecreasing flows on the circle}
\label{SKOR}
Since the disturbance flow is not continuous in time, it will be
necessary to
introduce a larger flow space to accommodate it.
Consider now $\phi=(\phi_I\dvtx I\sse\R)$, where $\phi_I\in\cD$
and $I$ ranges
over all nonempty bounded intervals.
Recall that we write $I=I_1\oplus I_2$ if $I, I_1, I_2$ are intervals
with $\sup I_1=\inf I_2$, $I_1\cap I_2=\es$ and $I_1\cup I_2=I$.
Say that $\phi$ is a \emph{weak flow} if
%
%e18 #&#
%
\begin{equation}
\label{WF} \phi_{I_2}^-\circ\phi_{I_1}^-\le
\phi_I^-\le\phi_I^+\le\phi_{I_2}^+\circ
\phi_{I_1}^+,\qquad I=I_1\oplus I_2.
\end{equation}
Say that $\phi$ is \emph{cadlag}\footnote{This definition is more
symmetric in time than
is usual for ``cadlag'': a more accurate acronym would be \emph{laglad.}}
if, for all $t\in\R$,
\[
\phi_{(s,t)}\to\id \qquad\mbox{as } s\uparrow t,\qquad \phi_{(t,u)}\to
\id\qquad \mbox{as } u\downarrow t.
\]
Write $D^\circ(\R,\cD)$ for the set of cadlag weak flows.
It will be convenient to extend a cadlag weak flow $\phi$ to the empty
interval by setting $\phi_\es=\id$.
Given a bounded interval $I$ and a sequence of bounded intervals
$(I_n\dvtx n\in\N)$, write $I_n\to I$ if the indicator functions $1_{I_n}
\to1_{I}$ pointwise as $n \to\infty$.
\emph{For any $\phi\in D^\circ(\R,\cD)$, we have}
%
%e19 #&#
%
\begin{equation}
\label{PHID} \phi_{I_n}\to\phi_I \qquad\mbox{as }
I_n\to I.
\end{equation}

Let $\phi$ be a cadlag weak flow and suppose that $\phi_{\{t\}}=\id$
for all $t\in\R$.
Then, using \eqref{WF}, we have $\phi_{(s,t)}=\phi_{(s,t]}=\phi
_{[s,t)}=\phi_{[s,t]}$ for all $s<t$ and, denoting
all these functions by $\phi_{ts}$,\footnote{Note the reversal of the
order of $s$ and $t$. This was chosen to make the weak flow property
\eqref{WFcont} appear neater.} the family $(\phi_{ts}\dvtx s,t\in\R
,s<t)$ is
a continuous weak
flow in the sense of the preceding section.

For $\phi,\psi\in D^\circ(\R,\cD)$ and $n\ge1$, define
%
%e20 #&#
%
\begin{equation}
\label{ddndist} d_D^{(n)}(\phi, \psi) = \inf
_\lambda\Bigl\{ \gamma(\lambda) \vee\sup_{I\sse\R}
\bigl\|\chi_n(I)\phi_I^\times-\chi_n
\bigl(\l(I)\bigr)\psi_{\lambda(I)}^\times\bigr\| \Bigr\},
\end{equation}
where the infimum is taken over the set of increasing homeomorphisms
$\lambda$ of $\R$, where
%
%e21 #&#
%
\begin{equation}
\label{gammadef} \gamma(\lambda) = \sup_{t\in\R}\bigl|\l(t)-t\bigr|\vee
\sup
_{s,t\in\R,s<t} \biggl\llvert\log\biggl(\frac{\lambda(t) -
\lambda(s)}{t-s} \biggr)
\biggr\rrvert,
\end{equation}
and where $\chi_n$ is the cutoff function\footnote{As in the case of
the standard Skorokhod topology, localization
in time sits awkwardly with the stretching of time introduced via the
homeomorphisms $\l$. There is no fundamental
obstacle, just some messiness at the edges. Note that, when $I\cup\l
(I)\sse[-n,n]$, we have
\[
\bigl\|\chi_n(I)\phi_I^\times-\chi_n
\bigl(\l(I)\bigr)\psi_{\lambda(I)}^\times\bigr\|=d_\cD(
\phi_I,\psi_{\l(I)}).
\]
Also, for all intervals $I$, we have $|\chi_n(\l(I))-\chi_n(I)|\le
\g(\l
)$ and
\[
\bigl\|\chi_n(I)\phi_I^\times-\chi_n
\bigl(\l(I)\bigr)\psi_{\lambda(I)}^\times\bigr\| \le\chi_n(I)d_\cD(
\phi_I,\psi_{\l(I)})+\bigl|\chi_n\bigl(\l(I)\bigr)-
\chi_n(I)\bigr|\bigl\|\psi^\times_{\l(I)}\bigr\|.
\]
}
given by
\[
\chi_n(I)=0\vee(n+1-R)\wedge1,\qquad R=\sup I\vee(-\inf I).
\]
Then define
%
%e22 #&#
%
\begin{equation}
\label{SMET} d_D(\phi, \psi)=\sum_{n=1}^\infty2^{-n}
\bigl\{d_D^{(n)}(\phi,\psi)\wedge1\bigr\}.
\end{equation}
Then $d_D$ is a metric on $D^\circ(\R,\cD)$ \emph{under which
$D^\circ(\R
,\cD)$ is
complete and separable.}
Moreover, \emph{the metrics $d_C$ and $d_D$ generate the same topology
on $C^\circ(\R,\cD)$}.
\emph{For the metric $d_D$, for all bounded intervals $I$ and all
$x\in\R
$, the} \textup{evaluation map}
\[
\phi\mapsto\phi_I^+(x)\dvtx D^\circ(\R,\cD)\to\R
\]
\emph{is Borel measurable.
Moreover, the Borel $\s$-algebra on $D^\circ(\R,\cD)$ is generated by
the set of all such evaluation maps with $I=(s,t]$ and $s,t$ and $x$ rational.}

%s6 #&#
\section{Convergence to the coalescing Brownian flow}
\label{MR}

We now give a criterion for weak convergence on $D^\circ(\R,\cD)$ and
use it to show that the disturbance flow converges to the coalescing
Brownian flow.

\emph{For $e=(s,x)\in\R^2$ and $\phi\in D^\circ(\R,\cD)$, the maps}
\[
t\mapsto\phi_{(s,t]}^\pm(x)\dvtx[s,\infty)\to\R
\]
\emph{are cadlag.} Hence, we can extend the maps $Z^e=Z^{e,+}$ and $Z^{e,-}$,
which we defined on $C^\circ(\R,\cD)$ in
Section~\ref{CBF}, to measurable maps
$Z^{e,\pm}\dvtx D^\circ(\R,\cD)\to D_e$ by setting
\[
Z^{e,\pm}(\phi)=\bigl(\phi_{(s,t]}^\pm(x)\dvtx t\ge s
\bigr).
\]
Let $E=(e_k\dvtx k\in\N)$ be any countable dense subset of $\R^2$.
Write $Z^{E,\pm}$ for the maps $D^\circ(\R,\cD)\to D_E$ given by
$Z^{E,\pm}=(Z^{e_k,\pm}\dvtx k\in\N)$. Write $Z^E=Z^{E,+}$.
The following result is a criterion for weak convergence on $D^\circ
(\R
,\cD)$.
If we restrict to measures supported on $C^\circ(\R,\cD)$, this is directly
analogous to \cite{FN}, Theorem~4.1.

%th6.1 #&#
%
\begin{theorem}\label{WCC}
Let $(\mu_n\dvtx n\in\N)$ and $\mu$ be Borel probability measures
on $D^\circ
(\R,\cD)$.
Assume that $Z^{E,-}=Z^{E,+}$ holds $\mu_n$-almost surely for all $n$
and $\mu$-almost surely.
Assume further that $\mu_n\circ(Z^E)^{-1}\to\mu\circ(Z^E)^{-1}$ weakly
on $D_E$.
Then $\mu_n\to\mu$ weakly on $D^\circ(\R,\cD)$.
\end{theorem}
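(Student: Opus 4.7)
The plan is to lift the assumed weak convergence on $D_E$ to weak convergence on $D^\circ(\R,\cD)$ via a measurable reconstruction of the flow from its values at rational starting points, combined with Skorokhod's representation theorem. Since $D_E$ is a complete separable metric space, Skorokhod's theorem produces a common probability space supporting $Y_n\to Y$ almost surely in $D_E$, with $Y_n$ and $Y$ having laws $\mu_n\circ(Z^E)^{-1}$ and $\mu\circ(Z^E)^{-1}$ respectively. Because $\mu$ concentrates on $C^\circ(\R,\cD)$, the limit $Y$ lies in $C_E$ almost surely, so Skorokhod convergence in each $D_{e_k}$ becomes locally uniform convergence of coordinate paths.

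The reconstruction map $\Phi$ will be defined on the event $\{Z^{E,-}=Z^{E,+}\}$, which carries full $\mu_n$- and $\mu$-measure by hypothesis. It is analogous to the map $\Phi^E$ from the proof of Theorem \ref{UBPR}: for rational $s<t$ and $q\in\Q$, $\phi^\pm_{(s,t]}(q)$ is the $t$-value of the $(s,q)$-coordinate of $Y$; right/left-continuous extension recovers $\phi^\pm_{(s,t]}$ on $\R$ for rational $s<t$; and the cadlag property (\ref{PHID}) combined with the weak flow inequalities (\ref{WF}) extends to real $s<t$ through rational approximations. Injectivity of $Z^E$ on $\{Z^{E,-}=Z^{E,+}\}$ then makes $\Phi$ a measurable two-sided inverse, and $\Psi_n:=\Phi(Y_n)$, $\Psi:=\Phi(Y)$ have the required laws $\mu_n$ and $\mu$.

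It remains to show $\Psi_n\to\Psi$ almost surely in $d_D$, working pointwise on the event where $Y_n\to Y$ locally uniformly. Fix $N$ and attempt $\lambda_n\to\id$ in the infimum defining $d_D^{(N)}$, reducing (up to the cutoff $\chi_N$) to
\[
\sup_{I\sse[-N-1,N+1]}d_\cD(\Psi_I,(\Psi_n)_I)\to 0.
\]
Continuity of $\Psi$ via (\ref{PHIC}) covers the time window by a finite grid of rational interval endpoints across which $(\Psi_I)$ oscillates by less than $\ve$ in $d_\cD$; the weak flow inequalities (\ref{WF}) then sandwich $(\Psi_n)_I$ between finitely many composed reference values, reducing uniform control in $I$ to control across the grid. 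On each rational reference interval, the locally uniform convergence of the $Y_n^{e_k}$ together with the italicized characterization of $d_\cD$-convergence (pointwise at continuity points of the limit) provides the required spatial control.

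The main obstacle is the uniformity over $I$ in this supremum combined with handling the time-jumps of $\Psi_n$: pointwise-in-$I$ convergence at rational endpoints is routine, but the reparametrization $\lambda_n$ must be chosen once and for all to absorb any jumps of $\Psi_n$ near the finite grid of reference points, while simultaneously serving all coordinate paths. Since $\Psi$ is jump-free and each $Z^{e_k}(\Psi_n)$ converges to a continuous limit in Skorokhod metric, the jumps of $Z^{e_k}(\Psi_n)$ shrink uniformly on compacts for every $k$; by the italicized characterization of $d_\cD$, so do the jumps of $\Psi_n$ in $d_\cD$, allowing $\g(\lambda_n)\to 0$ to be achieved by sliding the grid across any residual jump times.
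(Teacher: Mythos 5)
Your proposal takes essentially the same route as the paper: reconstruct each flow from its evaluations at rational space-time points via a map $\Phi^E$, observe that $\Phi^E$ is measurable and continuous at $C^\circ_E$, and push the hypothesis $\mu_n\circ(Z^E)^{-1}\to\mu\circ(Z^E)^{-1}$ through $\Phi^E$. The paper factors the continuity claim into a separate Appendix proposition and then invokes the continuous mapping theorem for weak convergence directly; your Skorokhod-representation route is a standard way to implement the same step, so there is no structural difference.

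The one place where your sketch goes astray is the final paragraph about the time reparametrizations $\lambda_n$. Having fixed $\lambda_n=\id$ (as you should), $\gamma(\lambda_n)=0$ automatically, and there is nothing to ``achieve by sliding the grid across residual jump times''; that sentence contradicts the earlier choice and misidentifies the mechanism. The paper's proof of the continuity of $\Phi^E$ at $C^\circ_E$ works with $\lambda=\id$ throughout and never moves the time grid to dodge jumps of $\phi^k$. Instead, it fixes a finite space-time grid $F=(m^{-1}\Z\cap[-n,n))\times(m^{-1}\Z\cap[0,1))$, uses the locally uniform convergence of the finitely many coordinate paths $z_k^{e_j}\to z^{e_j}$ (which holds because $z\in C^\circ_E$ is continuous), and then applies the weak-flow sandwich
$$
\phi^{k,+}_{(s,t]}(x)\le\phi^{k,+}_{(s_0,t]}(x_0)\le\phi^+_{ts_0}(x_0)+\ve'\le\phi^+_{ts}(x+\ve)+\ve
$$
(and the symmetric lower bound) to conclude $d_\cD(\phi^k_{(s,t]},\phi_{(s,t]})\le\ve$ for all $s<t$ in $(-n,n)$. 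Any jumps of $\phi^k$ are absorbed by the $\ve$-slack in the $d_\cD$ metric — not by reparametrizing time. So the mechanism you need is exactly the one you vaguely gesture at in the middle of the paragraph (the sandwich via the grid), and your closing sentence is a red herring. To make the proof complete, replace that closing discussion by the explicit sandwich estimate above, as in the Appendix proposition.
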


\begin{pf}
Set
\begin{eqnarray*}
D^\circ(E)&=&\bigl\{\phi\in D^\circ(\R,\cD)\dvtx
Z^{E,+}(\phi)=Z^{E,-}(\phi)\bigr\},
\\
D^\circ_E&=&\bigl\{Z^E(\phi)\dvtx\phi\in
D^\circ(E)\bigr\}.
\end{eqnarray*}
Let $\Phi_n$ and $\Phi$ be random variables in $D^\circ(\R,\cD)$ having
distributions $\mu_n$ and $\mu$, respectively.
Then $Z^E(\Phi_n)\to Z^E(\Phi)$ weakly on $D_E$.
Also $\Phi_n, \Phi\in D^\circ(E)$ almost surely, so $Z^E(\Phi_n),
Z^E(\Phi)\in D^\circ_E$ almost surely.
Now \emph{$D^\circ_E$ is measurable and $Z^E$ maps $D^\circ(E)$
bijectively to $D^\circ_E$}.
Denote the inverse bijection by $\Phi^E$.
Then \emph{$\Phi^E\dvtx D_E^\circ\to D^\circ(E)$ is measurable and
continuous}.\vspace*{1pt}
Hence, $\Phi_n=\Phi^E(Z^E(\Phi_n))\to\Phi^E(Z^E(\Phi))=\Phi$
weakly on
$D^\circ(\R,\cD)$.
\end{pf}

The Poisson disturbance flow with disturbance $f$ and the lattice
disturbance flow with disturbance $f$ were defined in Section~\ref{LFC}.
Properties (\ref{LFM}), (\ref{LWF}) and (\ref{LPC}) hold in both
cases and
imply that the flow $\Phi=(\Phi_I\dvtx I\sse\R)$ may be considered
as a
Borel random variable in $D^\circ(\R,\cD)$.
Moreover, as we noted in (\ref{XPME}), for either of these flows $\Phi
$, for all $e\in\R^2$, we have $Z^{e,-}(\Phi)=Z^{e,+}(\Phi)$
almost surely.
The same is true when $\Phi$ is a coalescing Brownian flow, as shown in
Theorem~\ref{UBPR}.
Our main result now follows directly from Proposition~\ref{LAF} and
Theorem~\ref{WCC}.

%th6.2 #&#
%
\begin{theorem}\label{MAIN}
The Poisson disturbance flow with disturbance $f$
and the lattice disturbance flow with disturbance $f$
both converge weakly to the coalescing Brownian flow on the circle
on $D^\circ(\R,\cD)$,
uniformly in $f\in\cD^*$ as $f$ becomes small and localized,
that is, as $\rho(f)\to\infty$ and $\l(f)\to0$.
\end{theorem}

%s7 #&#
\section{Time reversal}\label{TR}
Time reversal acts as an isometry on our metric spaces of weak flows.
The time reversal of a disturbance flow with disturbance $f$ is the
disturbance flow with disturbance $f^{-1}$.
We use these facts to give a new proof of the time-reversibility of the
coalescing Brownian flow, and to
obtain a weak limit for the joint law of forward and backward
trajectories for disturbance flows.

For $f^+\in\cR$ and $f^-\in\cL$, we define a \emph{left-continuous inverse}
$(f^+)^{-1}\in\cL$ and a \emph{right-continuous inverse}
$(f^-)^{-1}\in
\cR$
by
\begin{eqnarray*}
\bigl(f^+\bigr)^{-1}(y)&=&\inf\bigl\{x\in\R\dvtx f^+(x)>y\bigr\},
\\
\bigl(f^-\bigr)^{-1}(y)&=&\sup\bigl\{x\in\R\dvtx f^-(x)<y\bigr\}.
\end{eqnarray*}
The map $f^+\mapsto(f^+)^{-1}\dvtx\cR\to\cL$ is a bijection, with
$((f^+)^{-1})^{-1}=f^+$ and
\[
\bigl(f_1^+\circ f_2^+\bigr)^{-1}=
\bigl(f_2^+\bigr)^{-1}\circ\bigl(f_1^+
\bigr)^{-1}, \qquad f_1,f_2\in\cR.
\]
We have $f^+\circ(f^+)^{-1}=\id$ if and only if $f^+$ is a homeomorphism.
Define for $f=\{f^-,f^+\}\in\cD$ the \emph{inverse}
$f^{-1}=\{(f^+)^{-1},(f^-)^{-1}\}\in\cD$. Note that $(f^{-1})^\times
=-f^\times$,
so the map $f\mapsto f^{-1}\dvtx\cD\to\cD$ is an isometry.
Define the \emph{time-reversal map} $\wedge\dvtx D^\circ(\R,\cD
)\to D^\circ(\R
,\cD)$ by
\[
\hat\phi_I=\phi_{-I}^{-1},
\]
where $-I=\{-x\dvtx x\in I\}$.
It is straightforward to check that this is a well-defined isometry of
$D^\circ(\R,\cD)$, which restricts to an isometry of $C^\circ(\R
,\cD)$.

%pr7.1 #&#
%
\begin{proposition}\label{TRD}
The time-reversal of a disturbance flow with disturbance $f$ is a
disturbance flow with
disturbance $f^{-1}$.
\end{proposition}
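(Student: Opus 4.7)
The plan is to verify that $f^{-1}$ is an admissible basic map and then to realize $\hat\Phi$ pathwise as a disturbance flow driven by $f^{-1}$ with a suitable reindexing of the underlying iid sequence.

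For $f^{-1}\in\cD^*$: membership in $\cD$ is immediate, since inversion swaps left- and right-continuous modifications and the degree $1$ identity $f^+(x+n)=f^+(x)+n$ transfers to $(f^+)^{-1}(y+n)=(f^+)^{-1}(y)+n$. To check $\int_0^1\tilde{f^{-1}}(y)\,dy=0$, I would apply the classical area identity
\[
\int_0^1 f(x)\,dx+\int_{f(0)}^{f(0)+1}f^{-1}(y)\,dy = f(0)+1,
\]
rewrite $\int_{f(0)}^{f(0)+1}f^{-1}(y)\,dy$ as $\int_0^1\tilde{f^{-1}}(y)\,dy+f(0)+\tfrac12$ by periodicity of $\tilde{f^{-1}}$, and substitute $\int_0^1 f\,dx = \tfrac12$, which follows from the hypothesis $\int_0^1\tilde f\,dx=0$. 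I also need $\rho(f^{-1})=\rho(f)$: for smooth $f$, the change of variables $y=f(x)$ gives
\[
\int_0^1\tilde{f^{-1}}(y)^2\,dy = \int_0^1\tilde f(x)^2(1+\tilde f'(x))\,dx = \int_0^1\tilde f(x)^2\,dx + \tfrac13\bigl[\tilde f(x)^3\bigr]_0^1,
\]
and the boundary term vanishes by periodicity of $\tilde f$; the general case follows by approximation. This equality is crucial because it ensures that the lattice $\Z/\rho(f)$ of disturbance times for $\Phi$ coincides with the lattice $\Z/\rho(f^{-1})$ for the candidate limit.

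For the distributional identity: the relation $(f_\th)^{-1}=(f^{-1})_\th$ for each $\th\in(0,1]$ is a one-line check from $f_\th(x)=f(x-\th)+\th$. Applying inversion to
\[
\Phi_{-I} = f_{\Th_n}\circ\cdots\circ f_{\Th_{m+1}}
\]
(with $m<n$ the integer indices determined by the endpoints of $-I$) gives
\[
\hat\Phi_I = \Phi_{-I}^{-1} = (f^{-1})_{\Th_{m+1}}\circ\cdots\circ(f^{-1})_{\Th_n}.
\]
Setting $\Th'_k:=\Th_{-k}$ and matching indices, the right-hand side is exactly the value at $I$ of the disturbance flow $\Psi$ with basic map $f^{-1}$ driven by the sequence $(\Th'_k)_{k\in\Z}$. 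Since the $\Th_k$ are iid uniform on $(0,1]$, so is $(\Th'_k)_{k\in\Z}$, with the same joint law as $(\Th_k)_{k\in\Z}$. Hence $\hat\Phi$ and the canonical disturbance flow with basic map $f^{-1}$ agree in law on $D^\circ(\R,\cD)$, giving $\hat\mu_A^f=\mu_A^{f^{-1}}$.

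The main obstacle will be the index bookkeeping under time reversal, which must be matched to the left- and right-continuous conventions in the definition of $\Phi_I$ on general intervals. Under time reversal $(s,t]\mapsto[-t,-s)$, so the right-continuous floor $s\mapsto\lfloor\rho s\rfloor$ used on the forward side must be replaced by its left-continuous modification on the reversed side; this swap is exactly compensated by the fact that inversion in $\cD$ interchanges the roles of $f^+$ and $(f^-)^{-1}$. Verifying that the two swaps dovetail with the reindexing $\Th'_k=\Th_{-k}$, in particular at the exceptional endpoints lying on the lattice $\Z/\rho$, is the one place where care is needed; once established, the identity $\hat\Phi_I=\Phi_{-I}^{-1}$ holds pathwise as $\cD$-valued random variables for every bounded interval $I$, and the equality of laws follows.
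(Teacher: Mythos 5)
Your proposal follows the same strategy as the paper: verify $f^{-1}\in\cD^*$ with $\rho(f^{-1})=\rho(f)$, then realise $\hat\Phi$ pathwise as the disturbance flow with map $f^{-1}$ driven by the reversed iid sequence $\Th'_k=\Th_{-k}$, and conclude by equality in law of $(\Th_k)$ and $(\Th'_k)$. The reindexing, the identity $(f_\th)^{-1}=(f^{-1})_\th$, and the reversal of composition order are all handled correctly, and your remark about the $\pm$ conventions dovetailing under time-reversal is exactly the care the paper implicitly takes.

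The one genuine difference is in how the integral identities are verified. The paper sets $g=f^{-1}$, writes $\D=\{(x,y):y<f(x)\}=\{(x,y):x>g(y)\}$, and obtains both $\int_0^1\tilde g=-\int_0^1\tilde f$ and $\int_0^1\tilde g^2=\int_0^1\tilde f^2$ in one stroke via Fubini on $1_\D-1_{\D_0}$, with weights $1$ and $2(y-x)$ respectively. This works directly for arbitrary $f\in\cD^*$, jumps and flats included. You instead use the Young area identity for the mean (fine, and essentially the Fubini argument in disguise for the weight $1$ case) and, for $\rho$, a change of variables valid for smooth strictly increasing $f$ followed by ``approximation''. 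The approximation step is where your argument is softest: you would need to produce smooth strictly increasing $f_n\in\cD$ with $\tilde f_n\to\tilde f$ and $\tilde{f_n^{-1}}\to\tilde{f^{-1}}$ a.e.\ and boundedly, which is doable (mollify, add a small multiple of the identity, renormalise to restore degree $1$, and use that $f\mapsto f^{-1}$ is a $d_\cD$-isometry), but it is noticeably heavier than the paper's two-line Fubini computation and should be spelled out rather than asserted. Substituting the Fubini argument for the change-of-variables-plus-approximation would tighten the proof without changing anything else.
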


\begin{pf}
Fix $f\in\cD^*$. Set $g=f^{-1}$ and
\begin{eqnarray*}
\D&=&\bigl\{(x,y)\in\R^2\dvtx y<f(x)\bigr\}=\bigl\{(x,y)\in
\R^2\dvtx x>g(y)\bigr\},
\\
\D_0&=&\bigl\{(x,y)\in\R^2\dvtx y<x\bigr\}.
\end{eqnarray*}
Then, by Fubini's theorem,
%
%e23 #&#
%
\begin{equation}
\label{FM} \int_0^1\tilde f(x)\,dx=\int
_0^1\int_\R(1_\D-1_{\D_0})
(x,y)\,dx\,dy=-\int_0^1\tilde g(y)\,dy
\end{equation}
and
%
%e24 #&#
%
\begin{equation}
\label{SM} \int_0^1\tilde
f(x)^2\,dx=\int_0^1\int
_\R2(y-x) (1_\D-1_{\D
_0}) (x,y)\,dx\,dy=
\int_0^1\tilde g(y)^2\,dy.
\end{equation}
So $g\in\cD^*$ and $\rho(g)=\rho(f)$.
We may construct a lattice disturbance flow $\Phi$ with disturbance $f$
from a sequence $(\Th_n\dvtx n\in\Z)$
of independent random variables, uniformly distributed on $(0,1]$, by
\[
\Phi_I^\pm=f_{\Th_n}^\pm\circ\cdots
\circ f_{\Th_m}^\pm,
\]
where $m$ and $n$ are respectively the minimal and maximal integers in
$\rho I$.
Then
\[
\hat\Phi^\pm_I=g_{\Th_{-n}}^\pm\circ
\cdots\circ g_{\Th_{-m}}^\pm.
\]
Since $(\Th_n\dvtx n\in\Z)$ and $(\Th_{-n}\dvtx n\in\Z)$ have the same
distribution, it follows that $\hat\Phi$
is a lattice disturbance flow with disturbance $g$. The Poisson case is similar.
\end{pf}

We were surprised by the calculations (\ref{FM}) and (\ref{SM}) which,
though elementary, we did not
suspect until we realized they were forced by the known reversibility
of the universal scaling limit.
On the other hand, we can now deduce the reversibility of the limit, as
already known
for other formulations of the coalescing Brownian flow.
See, for example, \cite{A79,FN,STW,Zhou} and the references therein.

%co7.2 #&#
%
\begin{corollary}
\label{timerefcor}
The law $\mu_A$ of the coalescing Brownian flow on the circle is
invariant under time-reversal.
\end{corollary}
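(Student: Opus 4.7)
The plan is to combine the convergence in Theorem \ref{MAIN} with the identification, in the preceding proposition, of the time-reversed disturbance flow as another disturbance flow, and then to transfer the time-reversal across the weak limit.

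First, I would select a sequence of disturbances $(f_n)_{n\in\N}\sse\cD^*$ along which both $f_n$ and $f_n^{-1}$ satisfy the convergence hypotheses of Theorem \ref{MAIN}. The preceding proposition already shows $\rho(f_n^{-1})=\rho(f_n)$, so the condition $\rho(f_n)\to\infty$ is automatically inherited. Although $\l(f_n^{-1})$ is not a priori controlled by $\l(f_n)$, one can exhibit a concrete family for which both localization parameters tend to zero: take $f_n$ to be, for example, piecewise affine elements of $\cD^*$ with $\tilde f_n$ supported in an interval of length $\d_n\to0$, so that $\l(f_n)\le\d_n$. Since $f_n$ agrees with $\id$ outside this interval, so does $f_n^{-1}$, and hence $\l(f_n^{-1})\le\d_n$ as well; a careful choice of the amplitude of the perturbation within this interval also yields $\rho(f_n)\to\infty$.

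Next, Theorem \ref{MAIN} gives $\mu_A^{f_n}\to\mu_A$ and $\mu_A^{f_n^{-1}}\to\mu_A$ weakly on $D^\circ(\R,\cD)$. Since the time-reversal map $\wedge$ is an isometry of $D^\circ(\R,\cD)$, it is continuous, and the continuous mapping theorem yields
$$
\hat\mu_A^{f_n}=\mu_A^{f_n}\circ\wedge^{-1}\to\mu_A\circ\wedge^{-1}=\hat\mu_A
$$
weakly. The preceding proposition identifies $\hat\mu_A^{f_n}=\mu_A^{f_n^{-1}}$, and uniqueness of weak limits then forces $\hat\mu_A=\mu_A$. Applying the same argument to the $\ve$-scale disturbance flows, using the non-periodic half of Theorem \ref{MAIN} together with the corresponding time-reversal isometry on $D^\circ(\R,\bar\cD)$, gives the time-reversal invariance of $\bar\mu_A$.

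The main obstacle I anticipate is producing a clean approximating sequence $(f_n)$ for which both $f_n$ and $f_n^{-1}$ satisfy the localization and scaling hypotheses of Theorem \ref{MAIN} simultaneously. Once such a family is in hand, the rest of the proof is a direct application of the continuous mapping theorem and uniqueness of weak limits, with no further probabilistic input beyond the results already established.
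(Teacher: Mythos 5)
Your proposal is correct and takes essentially the same approach as the paper. The paper instantiates your ``piecewise affine disturbance with $\tilde f$ supported in a small interval'' by the explicit formula $f^+(n+x)=n+(r\vee x\wedge(1-r))$, for which it computes $\rho(f)=3/(2r^3)$ and $\l(f)\le 2r$ directly, and notes that $f^{-1}$ has the same support so $\l(f^{-1})\le 2r$; the paper also takes $\ve=\sqrt{r}$ to handle the non-periodic case simultaneously. The remainder of your argument (isometry of $\wedge$, continuous mapping theorem, uniqueness of weak limits) matches the paper's.
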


\begin{pf}
Fix $r\in(0,1/2]$ and define $f=f_r\in\cD^*$ by
\[
f^+(n+x)=n+\bigl(r\vee x\wedge(1-r)\bigr),\qquad  n\in\Z, x\in[0,1).
\]
Then $\tilde f^+(x)=((r-x)\vee0)+((1-r-x)\wedge0)$ for $x\in[0,1)$,
so $\rho(f)=3/(2r^3)$ and
\[
\int_0^1\tilde f(x)\tilde f(x+a)\,dx=0,\qquad  2r\le
a\le1-2r,
\]
so $\l(f)\le2r$. Moreover, $\rho(f^{-1})=\rho(f)$ and $\l
(f^{-1})\le2r$.

Write $\mu^f_A$ for the law of a lattice disturbance flow with
disturbance $f$.
Set $\hat\mu_A=\mu_A\circ\wedge^{-1}$ and $\hat\mu_A^f=\mu
_A^f\circ
\wedge^{-1}$.
Consider the limit $r\to0$. By Theorem~\ref{MAIN},
we know that $\mu^f_A\to\mu_A$ and $\mu^{f^{-1}}_A\to\mu_A$,
weakly on
$D^\circ(\R,\cD)$.
Since the time-reversal map $\phi\mapsto\hat\phi$ is an isometry, it
follows, using the preceding proposition, that $\mu^{f^{-1}}_A=\hat
\mu
^f_A\to\hat\mu_A$,
weakly on $D^\circ(\R,\cD)$. Hence, $\mu_A=\hat\mu_A$.
\end{pf}

The same argument may be used to prove time reversibility of the
coalescing Brownian flow
on the line, as introduced in the next section.
In fact, Theorem~\ref{MAINT} below applies to show that the $\sqrt
{r}$-scale disturbance flow (defined below) with disturbance $f_r$ (as above)
converges weakly as $r\to0$ to the coalescing Brownian flow on the
line. Then reversibility follows
by the argument of Corollary~\ref{timerefcor}.

From the flow-level result Theorem~\ref{MAIN}, we can deduce weak
convergence also for
paths running forward and backward in time from a given sequence of
points $E=(e_k\dvtx k\in\N)$ in $\R^2$.
For $e=(s,x)\in\R^2$, define $\check D_e=\{\xi\in D(\R,\R)\dvtx
\xi_s=x\}$
and set $\check D_E=\prod_{k=1}^\infty\check D_{e_k}$.
For $\phi\in D^\circ(\R,\cD)$, define
%
%e25 #&#
%
\begin{equation}
\label{zcheckdef} \check Z^{e,\pm}_t(\phi)= %
\cases{
\phi^\pm_{(s,t]}(x),& \quad $t\ge s$,\vspace*{2pt}
\cr
\bigl(
\phi^{-1}\bigr)^\pm_{(t,s]}(x),& \quad $t<s$.} %
\end{equation}
Then $\check Z^{e,\pm}(\phi)\in\check D_e$ and extends $Z^{e,\pm
}(\phi
)$, as defined in Section~\ref{SKOR}, from $[s,\infty)$ to the whole of
$\R$.
For all $e\in\R^2$, we have $\check Z^{e,+}=\check Z^{e,-}$ almost
everywhere on $D^\circ(\R,\cD)$
for both $\mu_A$ and $\mu_A^f$, for any disturbance $f$. So, we drop
the $\pm$.
Denote by $\check\mu_E^f$ the law of $(\check Z^{e_k}\dvtx k\in\N)$ on
$\check D_E$ under $\mu_A^f$
and by $\check\mu_E$ the corresponding law under $\mu_A$.

%co7.3 #&#
%
\begin{corollary}\label{BFP}
We have $\check\mu_E^f\to\check\mu_E$ weakly on $\check D_E$, uniformly
in $f\in\cD^*$, as $\rho(f)\to\infty$ and $\l(f)\to0$.
\end{corollary}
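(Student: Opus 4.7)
The plan is to apply the continuous mapping theorem to the flow-level weak convergence $\mu^f_A\to\mu_A$ on $D^\circ(\R,\cD)$ supplied by Theorem \ref{MAIN}. Define the map $\bar\Psi:D^\circ(\R,\cD)\to\bar D_E$ by $\bar\Psi(\phi)=(\bar Z^{e_k}(\phi):k\in\N)$. It is Borel measurable: each coordinate is built from the forward evaluation $\phi\mapsto Z^{e,+}(\phi)$, which was shown to be measurable in Section \ref{SKOR}, together with a forward evaluation of the time-reversal $\hat\phi$ (using that $\wedge$ is itself an isometry, hence measurable). By construction $\bar\mu_E^f=\mu_A^f\circ\bar\Psi^{-1}$ and $\bar\mu_E=\mu_A\circ\bar\Psi^{-1}$, so it is enough to exhibit a Borel set $A\sse D^\circ(\R,\cD)$ with $\mu_A(A)=1$ on which $\bar\Psi$ is continuous.

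The natural choice is $A=C^\circ(\R,\cD)$, which is Borel in $D^\circ(\R,\cD)$ (the metrics $d_C$ and $d_D$ agree on it) and has $\mu_A(A)=1$ by Theorem \ref{UBPR}. To verify continuity of $\bar\Psi$ at $\phi\in A$, suppose $\phi_n\to\phi$ in $d_D$. Unpacking $d_D$ one obtains homeomorphisms $\lambda_n$ close to the identity such that $\phi_{n,I}$ is close to $\phi_{\lambda_n(I)}$ on bounded intervals; since $\phi\in C^\circ$, the joint continuity (\ref{PHIC}) gives $\phi_{\lambda_n(I)}\to\phi_I$ in $\cD$, and hence $\phi_{n,I}\to\phi_I$ locally uniformly in $I$. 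For each $k$, composing with evaluation at $x_k$, which is continuous at $\phi_{(s_k,t]}$ for all but at most countably many $t$, and recalling that the limit path $t\mapsto\phi_{(s_k,t]}^+(x_k)$ is continuous for $\phi\in C^\circ$, yields $Z^{e_k}(\phi_n)\to Z^{e_k}(\phi)$ locally uniformly in $t\ge s_k$ and hence in the Skorokhod topology on $D_{e_k}$. For the backward piece $t<s_k$, use the identity $\bar Z^{(s,x)}(\phi)_t=\bar Z^{(-s,x)}(\hat\phi)_{-t}$ together with the fact that $\wedge$ is an isometry of $D^\circ(\R,\cD)$ restricting to an isometry of $C^\circ(\R,\cD)$; this reduces backward convergence to another instance of the forward convergence just established, applied to $\hat\phi_n\to\hat\phi$ with $\hat\phi\in C^\circ$. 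Continuity of $\phi$ at $s_k$ ensures that the forward and backward pieces join continuously into an element of $\bar D_{e_k}$.

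The main obstacle is the continuity step itself: because $d_D$ permits time reparameterizations, evaluation at the fixed starting time $s_k$ is not \emph{a priori} stable under $d_D$-convergence. The restriction to $\phi\in C^\circ$, which has no jumps in time, is what tames this: the witnessing $\lambda_n$ can be forced close to the identity uniformly near $s_k$, so that the local uniform convergence of the flow maps passes through evaluation at $x_k$ without trouble. Given this, the continuous mapping theorem applied to $\bar\Psi$ and Theorem \ref{MAIN} delivers $\bar\mu_E^f\to\bar\mu_E$ weakly on $\bar D_E$, with the uniformity in $f\in\cD^*$ inherited directly from the corresponding uniformity in Theorem \ref{MAIN}.
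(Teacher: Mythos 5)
The overall strategy — continuous mapping theorem applied to $\mu_A^f\to\mu_A$ from Theorem \ref{MAIN}, with a $\mu_A$-full set of continuity points — matches the paper's proof. However, your identification of the continuity set as all of $C^\circ(\R,\cD)$ is incorrect, and this is a genuine gap rather than a cosmetic difference.

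The issue is spatial, not temporal. Membership in $C^\circ(\R,\cD)$ controls the time-dependence of the flow maps, but says nothing about the spatial continuity of the individual maps $\phi_{(s_k,t]}\in\cD$ at the fixed points $x_k$: elements of $\cD$ may have jumps. In the coalescing Brownian flow, $\phi_{(s_k,t]}$ develops jumps exactly at coalescence boundaries, and if $x_k$ sits on such a boundary, then $\phi_{(s_k,t]}$ is discontinuous at $x_k$ for all $t$ beyond some time $t_0$. Your assertion that evaluation at $x_k$ is continuous at $\phi_{(s_k,t]}$ for ``all but at most countably many $t$'' therefore fails: the exceptional set can contain a half-line. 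And at such $\phi$, the map $\phi\mapsto\bar Z^{e_k,+}(\phi)$ is genuinely discontinuous, because $d_\cD$ is a L\'evy-type metric that controls maps only up to $\ve$-perturbations in space; when $\phi_n\to\phi$ with $\phi_{(s_k,t]}$ jumping at $x_k$, the values $\phi_{n,(s_k,t]}^+(x_k)$ are pinned down only up to the size of the jump. Replaying the same argument through the time-reversal does not repair this, since the backward flow has the same kind of spatial jumps.

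The paper's proof avoids this by not claiming continuity on all of $C^\circ(\R,\cD)$. It isolates the precise condition under which $\bar Z^{(s,x),+}$ is continuous at $\phi\in C^\circ(\R,\cD)$: that $\bar Z^{(s,x\pm\d),+}(\phi)\to\bar Z^{(s,x),+}(\phi)$ uniformly on $\R$ as $\d\to0$. This is exactly the assertion that $x$ is not a coalescence point of $\phi$ (in either time direction) from time $s$ onwards and backwards, and it is strictly stronger than $\phi\in C^\circ(\R,\cD)$. The crucial probabilistic input is that this condition holds $\mu_A$-almost surely for each deterministic $e=(s,x)$, which is what makes the continuous mapping theorem applicable. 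To complete your argument you would need to replace ``$A=C^\circ(\R,\cD)$'' by the (smaller, still $\mu_A$-full) set of $\phi$ satisfying this local uniform equicontinuity at each $e_k$, and then verify that the verification passes through the time reparameterizations inherent in $d_D$ — which is a short argument, but it is the substance of the proof and cannot be replaced by the bare fact $\mu_A(C^\circ(\R,\cD))=1$.
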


\begin{pf}
We can check that $\check Z^{(s,x),+}$ is continuous as a map $D^\circ
(\R,\cD)\to\check D_{(s,x)}$ at $\phi\in C^\circ(\R,\cD)$ provided
\[
\check Z^{(s,x\pm{\delta}),+}(\phi)\to\check Z^{(s,x),+}(\phi)
\]
uniformly on $\R$ as ${\delta}\to0$. Since this property holds for
$\mu_A$
almost all $\phi$, the claimed limit
follows from Theorem~\ref{MAIN} by a standard property of weak convergence.
\end{pf}

Weak convergence of the forward paths to coalescing Brownian motions
was shown in Proposition~\ref{LAF}.
The corresponding backward property is immediate from the fact that
the time reversal of a disturbance
flow is another such flow. What is new in the result just proved is the
identification of the limit of the joint law of these
backward and forward paths---which has the property that the
bi-infinite paths never cross.

%s8 #&#
\section{Local limits}\label{LL}
We now prove local weak convergence of disturbance flows, for a scale
$\ve\in(0,1]$ intermediate between the scale
of the disturbance $f$ and the unit scale of the circle.
Some variations of our set-up will be needed, as we rescale in a way
which does not
preserve the degree $1$ property (\ref{circdef}), and the limit object
is the coalescing Brownian flow on the line.
Write $\bar\cD$ for the set of all pairs $\{f^-,f^+\}$ where
$f^+\dvtx\R\to
\R$ is nondecreasing
and right-continuous and where $f^-$ is the left-continuous
modification of $f^+$.
For $\ve\in(0,1]$, define the scaling map $\s_\ve\dvtx\bar\cD\to
\bar\cD$ by
\[
\s_\ve f(x)=\ve^{-1}f(\ve x).
\]
This map can be thought of as zooming in on the neighborhood around the
origin. We associate to a disturbance flow $\Phi=(\Phi_I\dvtx I\sse
\R)$
the \emph{$\ve$-scale disturbance flow} $\Phi^\ve=(\Phi_I^\ve\dvtx
I\sse\R)$,
given by
\[
\Phi_I^\ve=\s_\ve(\Phi_{\ve^2 I}).
\]
For $e\in\R^2$, we write $X^{e,\ve}$ for the trajectory of $\Phi
^\ve$
starting from $e$.
By the estimate~(\ref{FR3}), the jumps of $X^{e,\ve}$ are bounded in
absolute value by $\ve^{-1}(3/\rho)^{1/3}$.
A~small variation of the proof of Proposition~\ref{LBM} then leads to
the following result.

%pr8.1 #&#
%
\begin{proposition}\label{LBMT}
The trajectory $X^{e,\ve}$ of the $\ve$-scale Poisson disturbance flow
with disturbance $f$ converges
weakly to Brownian motion on $D_e$, uniformly in $f\in\cD^*$ and $\ve
\in
(0,1]$ as $\ve^3\rho(f)\to\infty$.
\end{proposition}

Fix a sequence $E=(e_k\dvtx k\in\N)$ in $\R^2$ and write $D_E$ and
$C_E$ for
the spaces of cadlag and continuous
paths starting from $E$, as in Section~\ref{LFC}. Write $e_k=(s_k,x_k)$
and recall the coordinate processes
$Z^k$ and their filtration $(\cZ_t)_{t\in\R}$, defined in
Section~\ref{LFC}. Define on $C_E$ the collision times
\[
\bar T^{jk}=\inf\bigl\{t\ge s_j\vee s_k\dvtx
Z_t^j=Z_t^k\bigr\}.
\]
The law $\bar\mu_E$ on $C_E$ of coalescing Brownian motions \emph{on the
line} then has the following martingale
characterization: for all $j,k$, the processes
$(Z^k_t)_{t\ge s_k}$ and $(Z^j_tZ^k_t-(t-\bar T^{jk})^+)_{t\ge s_j\vee
s_k}$ are both continuous local martingales in the filtration $(\cZ
_t)_{t\in\R}$.

For small $\ve$, we shall need to quantify the localization of a
disturbance in terms of the smallest constant
$\l=\l(f,\ve)\in(0,1]$ such that
\[
\label{lambdadefT} \rho\int_0^1\bigl|\tilde f(x+a)
\tilde f(x)\bigr|\,dx\le\l,\qquad  a\in[\ve\l,1-\ve\l].
\]

%pr8.2 #&#
%
\begin{proposition}\label{LAFT}
The joint distribution $\mu_E^{f,\ve}$ of the family of trajectories
$(X^{e,\ve}\dvtx e\in E)$ in the $\ve$-scale Poisson disturbance flow with
disturbance $f$
converges weakly to the coalescing Brownian law $\bar\mu_E$ on $D_E$,
uniformly in $f\in\cD^*$, as $\ve\to0$ with $\ve^3\rho(f)\to
\infty$ and
$\l(f,\ve)\to0$.
\end{proposition}

\begin{pf}
Write $X^k$ for $X^{e_k,\ve}$ within the proof.
The family of laws $\{\mu^{f,\ve}_E\dvtx f\in\cD^*,\ve\in(0,1]\}$
is tight
on $D_E$.
Let $\mu$ be a weak limit law of this family for the limit $\ve\to0$
with $\ve^3\rho(f)\to\infty$ and $\l=\l(f,\ve)\to0$.
Then, as in Proposition~\ref{LAF}, under $\mu$, for all $j$, the
processes $(Z^j_t\dvtx t\ge s_j)$ and $((Z^j_t)^2-t\dvtx t\ge s_j)$ are
continuous local martingales.
For all $j,k$, the process
\[
X^j_tX^k_t-\int
_{s_j\vee s_k}^tb\bigl(\ve X^j_s,
\ve X^k_s\bigr)\,ds,\qquad  t\ge s_j\vee
s_k,
\]
is a martingale. Note that $|b(\ve X^j_s,\ve X^k_s)|\le\l$ until
$|X_t^j-X_t^k|$ leaves $[\l,\ve^{-1}-\l]$.
Define for $R\ge1$
\[
\bar T^{jk,R}=\inf\bigl\{t\ge s_j\vee s_k\dvtx
\bigl|Z_t^j-Z_t^k\bigr|\notin[1/R,R]\bigr\}
\]
then, $\bar T^{jk,R}\ua\bar T^{jk}$ everywhere on $C_E$ as $R\to
\infty$.
Under $\mu$, the process $(Z^j_tZ^k_t\dvtx\break   s_j\vee s_k\le t<\bar T^{jk,R})$
is a local martingale for all $R$,
so $(Z^j_tZ^k_t\dvtx s_j\vee s_k\le t<\bar T^{jk})$ is also a local martingale.
Now $\mu$ inherits from the laws $\mu^{f,\ve}_E$ the property
that, almost surely, the process $(Z^j_t-Z^k_t\dvtx t\ge s_j\vee s_k)$ does
not change sign.
Hence, $Z^j_t-Z^k_t$ is constant for $t\ge\bar T^{jk}$.
It follows that $(Z^j_tZ^k_t-(t-\bar T^{jk})^+)_{t\ge s_j\vee s_k}$ is
a continuous local martingale.
Hence, $\mu=\bar\mu_E$.
\end{pf}

We obtain state-spaces for flows on the line by replacing $\cD$ by
$\bar
\cD$ in the definitions made in Sections~\ref{CBF} and \ref{SKOR},
and replacing the metric $d_\cD$ by
%
%e26 #&#
%
\begin{equation}
\label{ddbardef} d_{\bar\cD}(f,g)=\sum_{n=1}^\infty2^{-n}
\sup_{t\in[-n,n]} \bigl(\bigl|f^\times(t)-g^\times(t)\bigr|
\wedge1 \bigr).
\end{equation}
Denote by $C^\circ(\R,\bar\cD)$ the set of continuous weak flows with
values in $\bar\cD$.
Define the coordinate processes $Z^e=Z^{e,+}$ and $Z^{e,-}$ and their
filtration $(\cF_t)_{t\in\R}$ on $C^\circ(\R,\bar\cD)$
just as for $C^\circ(\R,\cD)$ in Section~\ref{CBF}.
The collision time $\bar T^{ee'}\dvtx C^\circ(\R,\bar\cD)\to
[0,\infty]$, for
$e=(s,x)$ and $e'=(s',x')$, is now given by
\[
\bar T^{ee'}(\phi)=\inf\bigl\{t\ge s\vee s'\dvtx
Z^e_t(\phi)=Z^{e'}_t(\phi)\bigr\}.
\]
The following result is proved in \cite{TE}, Section~9 and, analogously
to Theorem~\ref{UBPR}, shows that there exists a unique probability
measure on $C^\circ(\R,\bar\cD)$ under which the left and right
coordinate processes $Z^{e,\pm}$ agree almost surely for all $e \in\R
^2$ and behave as coalescing Brownian motions.

%th8.3 #&#
%
\begin{theorem}\label{TUBPR}
There exists a unique Borel probability measure $\bar\mu_A$ on
$C^\circ
(\R,\bar\cD)$ under which,
for all $e,e'\in\R^2$, the processes $(Z^e_t)_{t\ge s(e)}$ and
$(Z^e_tZ_t^{e'}-(t-\bar T^{ee'})^+)_{t\ge s(e)\vee s(e')}$
are continuous local martingales for $(\cF_t)_{t\in\R}$.
Moreover, for all $e\in\R^2$, we have $Z^{e,+}=Z^{e,-}$ $\bar\mu
_A$-almost surely.
\end{theorem}

We call any $C^\circ(\R,\bar\cD)$-valued random variable with law
$\bar
\mu_A$ a \emph{coalescing Brownian flow}.
The space $D^\circ(\R,\bar\cD)$ of cadlag weak flows $(\phi_I\dvtx
I\sse\R)$ with
$\phi_I\in\bar\cD$ for all $I$ is defined analogously to $D^\circ
(\R,\cD)$.
The Skorokhod-type metric on $D^\circ(\R,\bar\cD)$ is defined just as
for $D^\circ(\R,\cD)$,
except that the metric of the uniform norm on $\cS$ is replaced by a
metric of uniform convergence on compacts on the space $\bar\cS$ of
contractions on $\R$. The following result follows from \cite{TE}, Lemma~14.1.
It extends \cite{FN}, Theorem~4.1, in allowing
processes with jumps in time.
Note that the additional noncrossing criterion needed in \cite{FN}
holds automatically in the space of weak flows.

%th8.4 #&#
%
\begin{theorem}\label{WCCT}
Let $(\mu_n\dvtx n\in\N)$ and $\mu$ be Borel probability measures
on $D^\circ
(\R,\bar\cD)$.
Assume that $Z^{E,-}=Z^{E,+}$ holds $\mu_n$-almost surely for all $n$
and $\mu$-almost surely.
Assume further that, for any finite sequence $E$ in $\R^2$, we have
$\mu
_n\circ(Z^E)^{-1}\to\mu\circ(Z^E)^{-1}$
weakly on $D_E$. Then $\mu_n\to\mu$ weakly on $D^\circ(\R,\bar\cD)$.
\end{theorem}

The $\ve$-scale Poisson disturbance flow $\Phi^\ve$ with disturbance $f$
may be considered as a Borel random variable in $D^\circ(\R,\bar\cD)$.
Moreover, for all $e\in\R^2$, we have $Z^{e,-}(\Phi^\ve
)=Z^{e,+}(\Phi
^\ve)$ almost surely.
The same is true in the lattice case.
Hence, Proposition~\ref{LAFT} and Theorems \ref{TUBPR} and \ref{WCCT}
imply the following local limit theorem.

%th8.5 #&#
%
\begin{theorem}\label{MAINT}
The $\ve$-scale Poisson disturbance flow with disturbance $f$
and the $\ve$-scale lattice disturbance flow with disturbance $f$
both converge weakly to the coalescing Brownian flow on the line
on $D^\circ(\R,\bar\cD)$,
uniformly in $f\in\cD^*$, as $\ve\to0$ with $\ve^3\rho(f)\to
\infty$ and
$\l(f,\ve)\to0$.
\end{theorem}

\begin{appendix}
%s9 #&#
\section*{Appendix}\label{app}
%s9.1 #&#
\subsection{Some properties of the space $\cD$ of nondecreasing functions
of degree~$1$}
We give proofs in this subsection of a number of assertions made in
Section~\ref{CBF}.

%pr9.1 #&#
%
%\setcounter{proposition}{0}
\begin{propositionn}
\label{fcrossbijection}
The map $f\mapsto f^\times\dvtx\cD\to\cS$ is a well-defined
bijection, with
inverse given by
\begin{eqnarray*}
f^-(x)&=&\inf\bigl\{t+f^\times(t)\dvtx t\in\R,x=t-f^\times(t)
\bigr\},
\\
f^+(x)&=&\sup\bigl\{t+f^\times(t)\dvtx t\in\R,x=t-f^\times(t)
\bigr\}.
\end{eqnarray*}
\end{propositionn}

\begin{pf}
Recall that $f^\times(t)=t-x$, where $x$ is the unique point such that
$f^-(x)\le2t-x\le f^+(x)$.
The periodicity of $f^\times$ is an easy consequence of the degree $1$
condition. We now show that
$f^\times$ is a contraction. Fix $s,t\in\R$ and suppose that
$f^\times(s)=s-y$. Switching the roles of $s$ and $t$ if necessary, we
may assume without
loss that $x\ge y$. If $x=y$, then $f^\times(s)-f^\times(t)=s-t$. On
the other hand, if $x>y$, then
$2s-y\le f^+(y)\le f^-(x)\le2t-x$, so
\begin{eqnarray*}
-(t-s)&\le&-(t-s)+(2t-x)-(2s-y)=f^\times(t)-f^\times(s)\\
&=&(t-s)-(x-y)<t-s.
\end{eqnarray*}
In both cases, we see that $|f^\times(t)-f^\times(s)|\le|t-s|$. Hence,
$f^\times\in\cS$.

Suppose now that $g\in\cS$. Consider, for each $x\in\R$, the set
\[
I_x=\bigl\{t+g(t)\dvtx t\in\R,x=t-g(t)\bigr\}.
\]
Since $g$ is a contraction, these sets are all intervals,
and, since $g$ is bounded, they cover $\R$.
For $x,y\in\R$ with $x>y$, and for $s,t\in\R$ with $x=t-g(t),y=s-g(s)$,
we have
$t-s-(g(t)-g(s))=x-y>0$, so $s\le t$, and so
\[
t+g(t)-\bigl(s+g(s)\bigr)=t-s+\bigl(g(t)-g(s)\bigr)\ge0.
\]
Define $h^+(y)=\sup I_y$ and $h^-(x)=\inf I_x$.
We have shown that $h^+(y)\le h^-(x)$.
Moreover, since the intervals $I_x$ cover $\R$, the functions $h^\pm$
must be
the left-continuous and right-continuous versions of a nondecreasing
function $h$,
which then has the degree $1$ property, because $g$ is periodic.
Thus, $h\in\cD$.

For each $t\in\R$, we have $h^\times(t)=t-x$,
where $2t-x\in I_x$, and so $2t-x=s+g(s)$ for some $s\in\R$
with $x=s+g(s)$. Then $s=t$ and so $h^\times(t)=g(t)$. Hence,
$h^\times=g$.
On the other hand, if we take $g=f^\times$ and if $x$ is a point of
continuity of $f$,
then we find $I_x=\{f(x)\}$, so $h^+(x)=h^-(x)=f(x)$. Hence, $h=f$.
We have now shown that $f\mapsto f^\times\dvtx\cD\to\cS$ is a bijection,
and that its inverse
has the claimed form.
\end{pf}

%pr9.2 #&#
%
\begin{propositionn}
For $f,g\in\cD$ and $\ve>0$,
\begin{eqnarray}
d_\cD(f,g)\le\ve\quad\iff\quad f^-(x-\ve)-\ve\le g^-(x)\le g^+(x) \le f^+(x+
\ve)+\ve
\nonumber\\
\eqntext{\mbox{for all }x\in\R.}
\end{eqnarray}
Moreover, for any sequence $(f_n\dvtx n\in\N)$ in $\cD$,
\begin{eqnarray}
f_n\to f\mbox{ in }\cD\quad\iff\quad f_n^+(x)\to f(x)
\nonumber\\
\eqntext{\mbox{at all points $x\in\R$ where $f$ is continuous}.}
\end{eqnarray}
\end{propositionn}

\begin{pf}
Suppose that $d_\cD(f,g)\le\ve$ and that $x$ is a continuity point
of $g$.
Then $g(x)=t+g^\times(t)$ for some $t\in\R$ with $x=t-g^\times(t)$.
We must have $x+\ve\ge t-f^\times(t)$ and $g(x)\le t+f^\times(t)+\ve$,
so $f^+(x+\ve)+\ve\ge t+f^\times(t)+\ve\ge g(x)$.
Similarly $f^-(x-\ve)-\ve\le g(x)$. These inequalities extend to all
$x\in\R$
by taking left and right limits along continuity points.

Conversely, suppose that $t \in\R$ is such that $|f^\times
(t)-g^\times
(t)| = d_\cD(f,g)$
and let $x = t-g^\times(t)$ and $y=t-f^\times(t)$. Then $x$ is the
unique point with
$g^-(x)+x \leq2t \leq g^+(x)+x$ and $y$ is the unique point such that
$f^-(y)+y \leq2t \leq f^+(y)+y$. Hence, $f^-(x-\ve)-\ve\le g^-(x)\le
g^+(x) \le f^+(x+\ve)+\ve$
implies $y \in[x - \ve, x+ \ve]$ and so $d_\cD(f,g) = |y-x| \le\ve$.

It follows directly that for any sequence $(f_n\dvtx n\in\N)$ in $\cD$,
if $d_\cD(f_n,f) \rightarrow0$ as $n \rightarrow\infty$,
then $f_n^+(x)\to f(x)$ at all points $x\in\R$ where $f$ is continuous.

Now suppose $f_n^+(x)\to f(x)$ at all points $x\in\R$ where $f$ is continuous.
By equicontinuity, it will suffice to show that $f_n^\times(t)
\rightarrow f^\times(t)$ for each $t\in\R$.
Set $x=t-f^\times(t)$ and $x_n=t-f^\times_n(t)$.
Given $\ve>0$, choose $y_1\in(x-\ve,x)$ and $y_2\in(x,x+\ve)$,
both points
of continuity of $f$. Now $f(y_1)+y_1<2t<f(y_2)+y_2$, so there exists
$N\in\N$
such that for all $n \geq N$, we have
$f^+_n(y_1)+y_1<2t<f^+_n(y_2)+y_2$, which implies
$x_n\in[y_1,y_2]$, and hence $|f_n^\times(t) - f^\times(t)| < \ve$,
as required.
\end{pf}

%pr9.3 #&#
%
\begin{propositionn}\label{WFL}
Suppose $f_n\to f, g_n\to g, h_n\to h$ in $\cD$ with
$h_n^+\le f_n^+\circ g_n^+$ for all $n$. Then $h^+\le f^+\circ g^+$.
\end{propositionn}

\begin{pf}
It will suffice to establish the inequality at all
points $x$ where $g$ and $h$ are both continuous.
Given $\ve>0$, since $f^+$ is right-continuous, there exists a point $y>g(x)$
where $f$ is continuous and such that $f(y)<f^+(g(x))+\ve$.
Then $f_n^+(y) < f^+(g(x))+\ve$ and $g_n^+(x)\le y$ eventually, so
\[
h_n^+(x)\le f_n^+\bigl(g_n^+(x)\bigr)\le
f_n^+(y)<f^+\bigl(g(x)\bigr)+\ve
\]
eventually. Hence, $h^+(x)=\lim_{n\to\infty}h_n^+(x)\le f^+(g^+(x))$,
as required.
\end{pf}

%s9.2 #&#
\subsection{Some properties of the continuous flow-space \texorpdfstring{$C^\circ(\mathbb{R},{\cal D})$}{$C^{circ}(\mathbb{R},{\cal D})$} and 
cadlag flow-space \texorpdfstring{$D^\circ(\mathbb{R},{\cal D})$}{$D^{circ}(\mathbb{R},{\cal D})$}}
We give proofs in this subsection of a number of assertions made in
Sections~\ref{CBF} and \ref{SKOR}.

%pr9.4 #&#
%
\begin{propositionn}
For $(s,x)\in\R^2$ and $\phi\in D^\circ(\R,\cD)$, the map
\[
t\mapsto\phi_{(s,t]}^+(x)\dvtx[s,\infty)\to\R
\]
is cadlag, and is moreover continuous whenever $\phi\in C^\circ(\R
,\cD)$.
\end{propositionn}

\begin{pf}
Given $t\ge s$ and $\ve>0$, we can choose ${\delta}>0$ so that for
all $u\in
(t,t+{\delta}]$, $d_\cD(\phi_{(t,u]},\id)<\ve/2$.
For such $u$ and for $x$ a point of continuity of $\phi_{(s,t]}$, we have
\begin{eqnarray*}
\phi_{(s,t]}^+(x)-\ve&=&\phi_{(s,t]}^-(x)-\ve
\\
&\leq& \phi_{(t,u]}^-\circ\phi_{(s,t]}^-(x)
\\
&\leq& \phi_{(s,u]}^-(x)
\\
&\leq& \phi_{(s,u]}^+(x)
\\
&\leq& \phi_{(t,u]}^+ \circ\phi_{(s,t]}^+(x)
\\
&\leq& \phi_{(s,t]}^+(x) + \ve,
\end{eqnarray*}
so $|\phi_{(s,u]}^+(x)-\phi_{(s,t]}^+(x)|\leq\ve$. The final estimate
extends to all $x$ by right-continuity.
Hence, the map is right continuous. A similar argument shows that, for
$u\in(s,t)$, we have
$|\phi_{(s,u]}^+(x)-\phi_{(s,t)}^+(x)|\to0$ as $u\to t$, so that the
map has a left limit at $t$
given by $\phi_{(s,t)}^+(x)$. Finally, if $\phi\in C^\circ(\R,\cD
)$, then
$\phi_{(s,t)}=\phi_{(s,t]}$, so the map is continuous.
\end{pf}

%pr9.5 #&#
%
\begin{propositionn}\label{PHICONT}
For all $\phi\in C^\circ(\R,\cD)$, the map $(s,t)\mapsto\phi
_{ts}\dvtx\{
(s,t)\dvtx s\le t\}\to\cD$ is continuous.
Moreover, for all $\phi\in D^\circ(\R,\cD)$ and for any sequence of
bounded intervals $I_n\to I$, we have $\phi_{I_n}\to\phi_I$.
\end{propositionn}

\begin{pf}
The first assertion follows from the second: given $\phi\in C^\circ
(\R
,\cD)$ and sequences $s_n\to s$ and $t_n\to t$, then,
passing to a subsequence if necessary, we can assume that $(s_n,t_n]\to
I$ for some interval $I$ with $\inf I=s$ and $\sup I=t$.
Then, by the second assertion, we have $\phi_{t_ns_n}\to\phi_I=\phi
_{ts}$, as required.

So, let us fix $\phi\in D^\circ(\R,\cD)$ and a sequence of bounded
intervals $I_n\to I$.
By combining the cadlag and weak flow properties, we can show the
following variant of the cadlag property: for all $t\in\R$, we have
%
%e27 #&#
%
\begin{equation}
\label{WFP2} \phi_{[s,t)}\to\id \qquad\mbox{as $s\uparrow t$},\qquad
\phi_{(t,u]}\to\id \qquad\mbox{as $u\downarrow t$}.
\end{equation}
For each $n$, there exist two disjoint intervals $J_n$ and $J_n'$,
possibly empty,
such that $I\triangle I_n=J_n\cup J_n'$. For any such $J_n$ and $J_n'$,
using the weak flow property, we obtain
\[
d_\cD(\phi_I,\phi_{I_n})\le\|
\phi_{J_n}-\id\|+\|\phi_{J_n'}-\id\|.
\]
Set $s=\inf I$, $s_n=\inf I_n$, $t=\sup I$ and $t_n=\sup I_n$.
Then $s_n\to s$, $t_n\to t$, and
\begin{eqnarray*}
&&\mbox{if $s\in I$ then $s\in I_n$ eventually},\qquad \mbox{if $s
\notin I$ then $s\notin I_n$ eventually},\\
%\]
%%
%\[
&&\mbox{if $t\in I$ then $t\in I_n$ eventually},\qquad \mbox{if $t
\notin I$ then $t\notin I_n$ eventually}.
\end{eqnarray*}
Hence, using the cadlag property or (\ref{WFP2}), or both, we find that
$\phi_{J_n}\to\id$ and $\phi_{J_n'}\to\id$, which proves the proposition.
\end{pf}

%pr9.6 #&#
%
\begin{propositionn}
The metrics $d_C$ and $d_D$ generate the same topology on $C^\circ(\R
,\cD)$.
\end{propositionn}

\begin{pf}
On comparing the definitions of $d_C^{(n)}$ and $d_D^{(n)}$ for each
$n\in\N$, and considering the choice $\l=\id$,
we see that $d_D\le d_C$. Hence, it will suffice to show, given $\phi
\in C^\circ(\R,\cD)$, $n\in\N$ and $\ve>0$,
that there exists $\ve'>0$ such that, for all $\psi\in C^\circ(\R
,\cD
)$, we have $d_C^{(n)}(\phi,\psi)<\ve$
whenever $d^{(n+1)}_D(\phi,\psi)<\ve'$. By the preceding proposition,
there exists a ${\delta}\in(0,1]$ such that
$d_\cD(\phi_{ts},\phi_{t's'})<\ve/2$ whenever $|s-s'|,|t-t'|\le
{\delta}$ and
$s,t\in(-n,n)$. Set $\ve'={\delta}\wedge(\ve/2)$
and suppose that $d^{(n+1)}_D(\phi,\psi)<\ve'$. Then there exists an
increasing homeomorphism $\l$ of $\R$, with
$|\l(t)-t|\le{\delta}$ for all $t$, such that, for all intervals
$I$, we have
$\|\chi_{n+1}(I)\psi_I^\times-\chi_{n+1}(\l(I))\phi_{\l
(I)}^\times\|<\ve/2$.
Given $s,t\in(-n,n)$ with $s<t$, take $I=(s,t]$.
Then $\chi_{n+1}(I)=\chi_{n+1}(\l(I))=1$, so $d_\cD(\phi_{\l(t)\l
(s)},\psi_{ts})=\|\psi_I^\times-\phi_{\l(I)}^\times\|<\ve/2$.
But then, for all such $s,t$, we have
\[
d_\cD(\phi_{ts},\psi_{ts})\le
d_\cD(\phi_{ts},\phi_{\l(t)\l(s)})+d_\cD(
\phi_{\l(t)\l(s)},\psi_{ts})<\ve,
\]
so $d_C^{(n)}(\phi,\psi)<\ve$, as required.
\end{pf}

%pr9.7 #&#
%
\begin{propositionn}
\label{compsep}
The metric spaces
$(C^\circ(\R,\cD),d_C)$ and $(D^\circ(\R,\cD),d_D)$ are complete
and separable.
\end{propositionn}

\begin{pf}
The argument for completeness is a variant of the corresponding
argument for the usual Skorokhod space $D(\R, S)$ of cadlag paths in
complete separable metric
space $S$, as found, for example, in \cite{B}.
Suppose then that $(\psi^n)_{n \geq1}$ is a Cauchy sequence in
$D^\circ
(\R,\cD)$.
There exists a subsequence $\phi^k=\psi^{n_k}$ such that
$d_D^{(n)}(\phi
^n, \phi^{n+1}) < 2^{-n}$ for all $n\ge1$.
It will suffice to find a limit in $D^\circ(\R,\cD)$ for $(\phi^n)_{n
\geq1}$.
Recall the definition of $\gamma$ from \eqref{gammadef}.
There exist increasing homeomorphisms $\k_n$ of $\R$
for which $\gamma(\k_n) < 2^{-n}$ and
\[
d_\cD\bigl(\phi^n_I,
\phi^{n+1}_{\k_n(I)}\bigr)<2^{-n},\qquad I\cup
\k_n(I)\sse(-n,n).
\]
For each $n\ge1$, the sequence $(\k_{n+m} \circ\cdots\circ\k
_{n})_{m\ge1}$ converges
uniformly on $\R$ to an increasing homeomorphism, $\lambda_n$ say,
with $\gamma(\lambda_n) < 2^{-n+1}$. Then $\k_n\circ\l_n^{-1}=\l
_{n+1}^{-1}$, so
\[
d_\cD\bigl(\phi^n_{\lambda_n^{-1}(I)},
\phi^{n+1}_{\lambda_{n+1}^{-1}(I)}\bigr) < 2^{-n},\qquad  I\sse(-n+1,n-1).
\]
So, for all $m\ge n$,
%
%e28 #&#
%
\begin{equation}
\label{PHMN} d_\cD\bigl(\phi^n_{\lambda_n^{-1}(I)},
\phi^{n+m}_{\lambda_{n+m}^{-1}(I)}\bigr) < 2^{-n+1},\qquad  I\sse(-n+1,n-1).
\end{equation}
Hence, for all bounded intervals $I\sse\R$,
$(\phi^n_{\lambda_n^{-1}(I)})_{n \geq1}$ is a
Cauchy sequence in $\cD$, which, since $\cD$ is complete, has a limit
$\phi_I\in\cD$.
On letting $m\to\infty$ in (\ref{PHMN}), we obtain
\[
d_\cD\bigl(\phi^n_{\lambda_n^{-1}(I)},\phi_I
\bigr) < 2^{-n+1}, \qquad I\sse(-n+1,n-1).
\]
By Proposition~\ref{WFL}, $\phi=(\phi_I\dvtx I\sse\R)$ has the
weak flow property.
To see that $\phi$ is cadlag, suppose given $\ve>0$ and $t\in\R$.
Choose $n$ such that
$2^{-n+1}\le\ve/3$ and $|t|\le n-2$. Then choose ${\delta}\in(0,1]$
such that
\[
d_\cD\bigl(\phi^n_{\l_n^{-1}(s,t)},\id\bigr)<\ve/3,\qquad
d_\cD\bigl(\phi^n_{\l
_n^{-1}(t,u)},\id\bigr)<\ve/3
\]
whenever $s\in(t-{\delta},t)$ and $u\in(t,t+{\delta})$. For such
$s$ and $u$, we
then have
\[
d_\cD(\phi_{(s,t)},\id)<\ve,\qquad d_\cD(
\phi_{(t,u)},\id)<\ve.
\]
Hence, $\phi\in D^\circ(\R,\cD)$. For $m\le n-3$, we have
\begin{eqnarray*}
d_D^{(m)}\bigl(\phi^n,\phi\bigr) &\le&\g(
\l_n)\vee\sup_{I\sse(-m-2,m+2)}\bigl\|\chi_m\bigl(
\l_n^{-1}(I)\bigr)\phi_{\l
_n^{-1}(I)}^{n\times}-
\chi_m(I)\phi_I^\times\bigr\|
\\
&\le&\g(\l_n)\vee\sup_{I\sse(-m-2,m+2)} \bigl
\{d_\cD\bigl(\phi^n_{\lambda
_n^{-1}(I)},\phi_I
\bigr)+\g(\l_n)\bigl\|\phi_I^\times\bigr\| \bigr\}
\\
&\le&2^{-n+1}\Bigl(1+\sup_{I\sse(-m-2,m+2)}\bigl\|\phi_I^\times
\bigr\|\Bigr).
\end{eqnarray*}
Hence, $d_D(\phi^n,\phi)\to0$ as $n\to\infty$.
We have shown that $D^\circ(\R,\cD)$ is complete.
If the sequence $(\phi^n)_{n \geq1}$ in fact lies in $C^\circ(\R
,\cD
)$, then
by an obvious variation of the argument for the cadlag property, the limit
$\phi$ also lies in $C^\circ(\R,\cD)$. Hence, $C^\circ(\R,\cD)$
is also
complete.
In particular, $C^\circ(\R,\cD)$ is a closed subspace in $D^\circ
(\R,\cD)$.

We turn to the question of separability.
Let us write $D_N$ for the set of those $\phi\in D^\circ(\R,\cD)$
such that:
\begin{longlist}[(ii)]
\item[(i)] for some $n\in\N$ and some rationals $t_1<\cdots<t_n$,
we have
$\phi_J=\id$ for all time intervals $J$, which do not intersect the set
$\{t_1,\ldots,t_n\}$;
\item[(ii)] for all other time intervals $I$, the maps $\phi_I$ and
$\phi_I^{-1}$ on $\R$
are constant on all space intervals which do not intersect $2^{-N}\Z$.
\end{longlist}
Note that each $\phi\in D_N$ is determined by the maps $\phi_{(t_k,t_m]}$,
for integers $0\le k<m\le n$, where $t_0<t_1$,
and for each of these maps there are only countably many possibilities
(finitely many
if we insist that $\phi(0)\in[0,1)$).
Hence, $D_N$ is countable and so is $D_*=\bigcup_{N\ge1}D_N$.
We shall show that $D_*$ is also dense in $D^\circ(\R,\cD)$.

Fix $\phi\in D^\circ(\R,\cD)$ and $n_0\ge1$. It will suffice to find,
for a given $\ve>0$,
a $\psi\in D_*$ with $d^{(n_0)}_D(\phi,\psi)<\ve$. By the cadlag
property and compactness,
there exist $n\in\N$ and reals $s_1<\cdots<s_n$ in $I_0=(-n_0-1,n_0+1)$
such that
$d_\cD(\phi_I,\id)<\ve/4$ for every subinterval $I$ of $I_0$, which
does not intersect $\{s_1,\ldots,s_n\}$.
To see this, let
\[
A=\bigl\{t \in I_0 \dvtx d_\cD(\phi_{(s,t]},
\phi_{(s,t)})\geq\ve/4 \mbox{ for some } s<t \bigr\}.
\]
If $A$ contains infinitely many points, then there exists a sequence
$(u_m)_{m \in\N}$ in $A$ and $u \in\R$ such that $u_m \to u$ strictly
monotonically. Suppose that $u_m \uparrow u$. Then, as in the proof of
Proposition~\ref{PHICONT}, $\|\phi_{(u_m,u)}-\id\| < \ve/8$ and $\|
\phi
_{[u_m,u)}-\id\| < \ve/8$ for $m$ is sufficiently large. But then, for
all $s<u_m$,
\begin{eqnarray*}
d_\cD(\phi_{(s,u_m]}, \phi_{(s,u_m)}) &\leq&
d_\cD(\phi_{(s,u_m]}, \phi_{(s,u)}) +
d_\cD(\phi_{(s,u)}, \phi_{(s,u_m)})
\\
&\leq& \| \phi_{(u_m,u)}-\id\| + \| \phi_{[u_m,u)}-\id\|
\\
&<& \ve/4,
\end{eqnarray*}
contradicting $u_m \in A$. A similar contradiction arises if $u_m
\downarrow u$, so $A$ contains finitely many points. Therefore, $I_0
\setminus A$ consists of the disjoint union of finitely many open
intervals. It remains to show that if $J$ is one of these intervals,
there exists some $\eta>0$ such that if an interval $I \subseteq J$ and
$\sup I - \inf I < \eta$, then $d_\cD(\phi_I, \id) < \ve/4$. If not
then, there exists a sequence of intervals $I_m \subseteq J$ with $\sup
I_m - \inf I_m < m^{-1}$ and $d_\cD(\phi_{I_m}, \id) \geq\ve/4$. By
restricting to a subsequence if necessary $I_m \to I$ where
$I=\varnothing
$ or $\{t\}$ for some $t \in J$. Therefore, $\phi_{I_m} \to\phi_I$.
But $\phi_{\varnothing}=\id$ and $d_\cD(\phi_{\{t\}}, \id) < \ve
/4$ for
all $t \notin A$, which contradicts $d_\cD(\phi_{I_m}, \id) \geq\ve/4$
for all $m$.

Next we can find rationals $t_1<\cdots<t_n$ in $I_0$ and
an increasing homeomorphism $\l$ of $\R$, with $\l(t)=t$ for
$t\notin
I_0$, with $\g(\l)\sup_{I\sse I_0}\|\phi^\times_I\|<\ve/4$,
and such that $\l(t_m)=s_m$ for all $m$.
Set $s_0=t_0=-n_0-1$.

For $f\in\cD$, write $\Delta(f)$ for the set of points where $f$ is not
continuous.
Define, for $m=0,1,\ldots,n$,
\[
\Delta_m=\bigcup_{k=0}^{m-1}
\Delta\bigl(\phi_{(s_k,s_m]}^{-1}\bigr)\cup\bigcup
_{k=m+1}^n\Delta(\phi_{(s_m,s_k]}).
\]
Then $\Delta_m$ is countable, so we can choose $N\ge1$ with $16 \cdot
2^{-N}\le\ve$ and choose
$\ve_m\in\R$ with $|\ve_m|\le2^{-N}$ such that
\[
\t_m(\Delta_m)\cap2^{-N}\Z=\es,\qquad m=0,1,
\ldots,n,
\]
where $\t_m(x)=x+\ve_m$. Set
\[
{\delta}^-(x)=2^N\bigl\lceil2^{-N}x\bigr\rceil,\qquad
{\delta}^+(x)=2^N\bigl\lfloor2^{-N}x\bigr\rfloor+1.
\]
Note that ${\delta}=\{{\delta}^-,{\delta}^+\}\in\cD$. Define for
$0\le k<m\le n$
\begin{eqnarray*}
\psi_{(t_k,t_m]}^-&=&\bigl({\delta}^{-1}\bigr)^-\circ(
\t_m)^{-1}\circ\phi_{(s_k,s_m]}^-\circ\t_k
\circ{\delta}^-,
\\
\psi_{(t_k,t_m]}^+&=&\bigl({\delta}^{-1}\bigr)^+\circ(
\t_m)^{-1}\circ\phi_{(s_k,s_m]}^+\circ\t_k
\circ{\delta}^+.
\end{eqnarray*}
Then $\psi_{(t_k,t_m]}=\{\psi_{(t_k,t_m]}^-,\psi_{(t_k,t_m]}^+\}\in
\cD$
by our choice of $\ve_k$ and $\ve_m$.
Moreover, ${\delta}^+\circ({\delta}^{-1})^+\ge\id$ and ${\delta
}^-\circ({\delta}^{-1})^-\le\id$ so,
for $0\le m<m'<m''\le n$, we obtain the inequalities
\[
\psi_{(t_{m'},t_{m''}]}^-\circ\psi_{(t_m,t_{m'}]}^-\le\psi
_{(t_m,t_{m''}]}^- \le
\psi_{(t_m,t_{m''}]}^+\le\psi_{(t_{m'},t_{m''}]}^+\circ\psi
_{(t_m,t_{m'}]}^+
\]
from the corresponding inequalities for $\phi$.
We use the equations $\|{\delta}-\id\|=2^{-N}$ and $\|\t_m-\id\|
=|\ve_m|$
to see that
\[
d_\cD(\phi_{(s_k,s_m]},\psi_{(t_k,t_m]})\le4
\cdot2^{-N},\qquad  0\le k<m\le n.
\]
For all intervals $J$ such that
$J\cap\{t_1,\ldots,t_n\}=\{t_{k+1},\ldots,t_m\}$, define $\psi
_J=\psi
_{(t_k,t_m]}$.
For such intervals $J$, with $J\sse I_0$, we have
$d_\cD(\phi_{(s_k,s_m]\sm\l(J)},\id)<\ve/4$ and $d_\cD(\phi_{\l
(J)\sm
(s_k,s_m]},\id)<\ve/4$; so,
using the weak flow property for $\phi$,
\begin{eqnarray*}
d_\cD(\psi_J, \phi_{\l(J)}) &\le&
d_\cD(\psi_{(t_k,t_m]},\phi_{(s_k,s_m]}) +d_\cD(
\phi_{(s_k,s_m]},\phi_{\l(J)})
\\
&\le& 4 \cdot2^{-N}+2\ve/4
\\
&<&3\ve/4.
\end{eqnarray*}
Define $\psi_J=\id$ for all intervals $J$ which do not intersect $\{
t_1,\ldots,t_n\}$.
For such intervals $J$ with $J\sse I_0$, we have $d_\cD(\psi_J, \phi
_{\l
(J)})\le d_\cD(\id,\phi_{\l(J)})\le\ve/4$.
Now $\psi\in D_N$ and
\[
d^{(n_0)}_D(\phi,\psi)\le\g(\l)\vee\sup
_{J\sse I_0}\bigl\{d_\cD(\psi_J,\phi
_{\l(J)})+\g(\l)\bigl\|\phi_J^\times\bigr\|\bigr\}<\ve
\]
as required.
This proves that $D^\circ(\R,\cD)$ is separable and, since $C^\circ
(\R
,\cD)$
is a closed subspace of $D^\circ(\R,\cD)$, it follows that $C^\circ
(\R
,\cD)$
is also separable.
\end{pf}

%pr9.8 #&#
%
\begin{propositionn}
For all $s,t\in\R$ with $s<t$, and all $x\in\R$, the map
$\phi\mapsto\phi_{ts}^+(x)$ on $C^\circ(\R,\cD)$ is Borel measurable.
Moreover, the Borel $\s$-algebra on $C^\circ(\R,\cD)$ is generated by
the set of all such maps with $s,t$ and $x$ rational.

For all bounded intervals $I\sse\R$ and all $x\in\R$, the
map $\phi\mapsto\phi_I^+(x)$ on $D^\circ(\R,\cD)$ is Borel
measurable. Moreover, the Borel $\s$-algebra on $D^\circ(\R,\cD)$ is
generated by
the set of all such maps with $I=(s,t]$ and with $s,t$ and $x$ rational.
\end{propositionn}

\begin{pf}
The assertions for $C^\circ(\R,\cD)$ can be proved more simply than
those for $D^\circ(\R,\cD)$.
We omit details of the former, but note that these follow also from the
latter, by general
measure theoretic arguments, given what we already know about the two spaces.

The proof for $D^\circ(\R,\cD)$ is an adaptation of the analogous
result for the classical Skorokhod
space; see, for example, \cite{MR1943877}, page 335.
We prove first the Borel measurability of the evaluation maps.
Given a bounded interval $I$ and $x\in\R$, we can find $s_n,t_n\in\R$
such that $(s_n,t_n]\to I$ as $n\to\infty$.
Then $\phi^+_I(x)=\lim_{m\to\infty}\lim_{n\to\infty}\phi
^+_{(s_n,t_n]}(x+1/m)$, by Proposition~\ref{PHICONT}.
Hence, it will suffice to consider intervals $I$ of the form $(s,t]$.
Fix $s,t$ and $x$ and define for each $m,n\in\N$
a function $F_{m,n}$ on $D^\circ(\R,\cD)$ by
\[
F_{m,n}(\phi)=\int_s^{s+1/n}\int
_t^{t+1/n}\int_x^{x+1/m}
\phi^+_{(s',t']}\bigl(x'\bigr)\,dx'\,dt'\,ds'.
\]
Suppose $\phi^k\to\phi$ in $D^\circ(\R,\cD)$. We can choose increasing
homeomorphisms $\l_k$ of $\R$ such that,
$\g(\l_k)\to0$ and, uniformly in $r\in[s-1,s+1]$ and $u\in[t-1,t+1]$,
we have
\[
d_\cD\bigl(\phi^k_{\l_k(r,u]},\phi_{(r,u]}
\bigr)\to0.
\]
Define
\[
f(r,u)=\int_x^{x+1/m}\phi_{\l(r,u]}
\bigl(x'\bigr)\,dx',\qquad f_k(r,u)=\int
_x^{x+1/m}\phi^k_{\l_k(r,u]}
\bigl(x'\bigr)\,dx'.
\]
Then $f_k(r,u)\to f(r,u)$, uniformly in $r\in[s-1,s+1]$ and $u\in[t-1,t+1]$.
Set $\mu_k=\l_k^{-1}$. Then
\begin{eqnarray*}
F_{m,n}\bigl(\phi^k\bigr)&=&\int_{\mu_k(s)}^{\mu_k(s+1/n)}
\int_{\mu_k(t)}^{\mu
_k(t+1/n)}f_k(r,u)\,d
\l_k(u)\,d\l_k(r)
\\
&\to&\int_s^{s+1/n}\int_t^{t+1/n}f(r,u)\,du\,dr=
F_{m,n}(\phi),
\end{eqnarray*}
so $F_{m,n}$ is continuous on $D^\circ(\R,\cD)$. By Proposition~\ref
{PHICONT}, we have
\[
\phi_{(s,t]}^+(x)=\lim_{m\to\infty}\lim_{n\to\infty}
\frac{1}{mn^2}F_{m,n}(\phi).
\]
Hence, $\phi\mapsto\phi_{(s,t]}^+(x)$ is Borel measurable, as required.

Write now $\cE$ for the $\s$-algebra on $D^\circ(\R,\cD)$
generated by
all maps of this form
with $s,t$ and $x$ rational.
It remains to show that $\cE$ contains the Borel $\s$-algebra of
$D^\circ(\R,\cD)$.
Write $\{(I_k,z_k)\dvtx k\in\N\}$ for an enumeration of the set $\{
(s,t]\dvtx s,t\in\Q,s<t\}\times\Q$.
It is straightforward to show that, for all $k$, the map $\phi\mapsto
\phi_{I_k}^\times(z_k)$ is $\cE$-measurable.
Fix $n\in\N$, $\phi^0\in D^\circ(\R,\cD)$, $r\in(0,\infty)$ and
$k\in\N
$, and consider the set
\[
A(k,r)=\bigl\{\phi\in D^\circ(\R,\cD)\dvtx\bigl(\chi_n(I_1)
\phi_{I_1}^\times(z_1),\ldots,\chi_n(I_k)
\phi_{I_k}^\times(z_k)\bigr)\in B(k,r)\bigr\},
\]
where
\[
B(k,r)=\bigcup_\l\Bigl\{(y_1,
\ldots,y_k)\in\R^k\dvtx\max_{j\le k}\bigl|y_j-
\chi_n\bigl(\l(I_j)\bigr)\phi_{\l(I_j)}^{0\times}(z_j)\bigr|<r
\Bigr\},
\]
where the union is taken over all increasing homeomorphisms $\l$ of
$\R
$ with $\g(\l)<r$.
Note that $B(k,r)$ is an open set in $\R^k$, so $A(k,r)\in\cE$, so
$A=\bigcup_{m\in\N}\bigcap_{k\in\N}A(k,r-1/m)\in\cE$.

Consider the set
\[
C=\bigl\{\phi\in D^\circ(\R,\cD)\dvtx d_D^{(n)}
\bigl(\phi,\phi^0\bigr)<r\bigr\}.
\]
It is straightforward to check from the definition of $d_D^{(n)}$, that
$C\sse A$. Suppose that $\phi\in A$. We shall
show that $\phi\in C$. Then $C=A$, so $C\in\cE$, and since sets of this
form generate the Borel $\s$-algebra, we are done.

We can find an $m\in\N$ and, for each $k\in\N$, a $\l_k$ with $\g
(\l
_k)<r-1/m$ such that
\[
\bigl|\chi_n(I_j)\phi_{I_j}^\times(z_j)-
\chi_n\bigl(\l_k(I_j)\bigr)
\phi_{\l
_k(I_j)}^{0\times}(z_j)\bigr|<r-1/m,\qquad j=1,\ldots,k.
\]
Without loss of generality, we may assume that the sequence $(\l
_k\dvtx k\in
\N)$ converges uniformly on compacts, and
that its limit, $\l$ say, satisfies $\g(\l)\le r-1/m$.
By Proposition~\ref{PHICONT}, for each $j$, there is an interval $\hat
I_j$, having the same endpoints as $I_j$
such that $\phi_{\l(\hat I_j)}$ is a limit point in $\cD$ of the
sequence $(\phi_{\l_k(I_j)}\dvtx k\in\N)$,
so $\phi_{\l(\hat I_j)}^\times$ is a limit point in $\cS$ of the
sequence $(\phi_{\l_k(I_j)}^\times\dvtx k\in\N)$. Then
\[
\bigl|\chi_n(I_j)\phi_{I_j}^\times(z_j)-
\chi_n\bigl(\l(\hat I_j)\bigr)\phi_{\l(\hat
I_j)}^{0\times}(z_j)\bigr|
\le r-1/m
\]
for all $j$. For all bounded intervals $I$ and all $z\in\R$, we can
find a sequence $(j_p\dvtx p\in\N)$ such that
$I_{j_p}\to I$, $\hat I_{j_p}\to I$ and $z_{j_p}\to z$. So, we obtain
\[
\bigl|\chi_n(I)\phi_{I}^\times(z)-
\chi_n\bigl(\l(I)\bigr)\phi_{\l(I)}^{0\times}(z)\bigr|\le
r-1/m.
\]
Hence, $d_D^{(n)}(\phi,\phi^0)\le r-1/m$ and $\phi\in C$, as we claimed.
\end{pf}

Recall that, for $e=(s,x)\in\R^2$ and $\phi\in D^\circ(\R,\cD)$,
we set
\[
Z^{e,\pm}(\phi)=\bigl(\phi^\pm_{(s,t]}(x)\dvtx t\ge s
\bigr)
\]
and for sequences $E=(e_k\dvtx k\in\N)$ in $\R^2$, we set $Z^{E,\pm
}=(Z^{e_k,\pm}\dvtx k\in\N)$.
Also
\[
C^{\circ,\pm}_E=\bigl\{Z^{E,\pm}(\phi)\dvtx\phi\in
C^\circ(\R,\cD)\bigr\},\qquad D^{\circ,\pm}_E=\bigl
\{Z^{E,\pm}(\phi)\dvtx\phi\in D^\circ(\R,\cD)\bigr\}
\]
and
\begin{eqnarray*}
D^\circ(E)&=&\bigl\{\phi\in D^\circ(\R,\cD)\dvtx
Z^{E,+}(\phi)=Z^{E,-}(\phi)\bigr\},
\\
D^\circ_E&=&\bigl\{Z^E(\phi)\dvtx\phi\in
D^\circ(E)\bigr\}.
\end{eqnarray*}

%pr9.9 #&#
%
\begin{propositionn}
Let $E$ be a countable subset of $\R^2$ containing\footnote{The role of
$\Q$ here could be played by any countable dense subset of $\R$. The
same comment applies to Propositions \ref{MPI} and \ref{PCP}.} $\Q^2$.
Then $Z^{E,+}\dvtx C^\circ(\R,\cD)\to C^{\circ,+}_E$ is a bijection,
$C^{\circ,+}_E$ is a measurable subset of $C_E$, and
the inverse bijection $\Phi^{E,+}\dvtx C^{\circ,+}_E\to C^\circ(\R
,\cD)$ is
a measurable map.
Moreover, $Z^{E,+}\dvtx D^\circ(\R,\cD)\to D^{\circ,+}_E$ is also a
bijection,
$D^{\circ,+}_E$ is a measurable subset of $D_E$
and the inverse bijection $\Phi^{E,+}\dvtx D^{\circ,+}_E\to D^\circ
(\R,\cD)$
is also a measurable map.
Moreover, the same statements hold with $+$ replaced by $-$, we have
$D^\circ_E=D^{\circ,+}_E\cap D^{\circ,-}_E$
and $\Phi^{E,+}=\Phi^{E,-}$ on $D_E^\circ$.
\end{propositionn}

\begin{pf}
We discuss only the cadlag case.
The same comments apply as in the preceding proof about the
relationship of the cadlag and continuous cases.
It is straightforward to see from the density of $E$ in $\R^2$ and the
continuity properties of cadlag weak flows
that $Z^{E,+}$ and $Z^{E,-}$ are both injective on $D^\circ(\R,\cD)$.
We shall instead give an explicit description of the ranges $D_E^{\circ
,\pm}$ and explicit constructions of inverse maps
$\Phi^{E,+}$ and $\Phi^{E,-}$, which agree on $D_E^{\circ,+}\cap
D_E^{\circ,-}$, allowing us to establish measurability (as well as
injectivity).
Consider for $z\in D_E$ the conditions
%
%e29 #&#
%
\begin{equation}
\label{ZDEG1} z_t^{(s,x+n)}=z_t^{(s,x)}+n,\qquad
s,t,x\in\Q, s<t, n\in\Z
\end{equation}
and
%
%e30 #&#
%
\begin{equation}
\label{ZRIGHT} z_t^{(s,x)}=\inf_{y\in\Q,y>x}z_t^{(s,y)},\qquad
(s,x)\in E, t\in\Q, t>s.
\end{equation}
Under these conditions, define for $s,t\in\Q$ with $s<t$ and for
$x\in\R$,
\[
\Phi_{(s,t]}^-(x)=\sup_{y\in\Q,y<x}z_t^{(s,y)},\qquad
\Phi_{(s,t]}^+(x)=\inf_{y\in\Q,y>x}z_t^{(s,y)}.
\]
Then $\Phi_{(s,t]}=\{\Phi_{(s,t]}^-,\Phi_{(s,t]}^+\}\in\cD$ and
\[
\Phi_{(s,t]}^+(x)=z_t^{(s,x)}, \qquad s,t,x\in\Q, s<t.
\]
Now consider the following additional conditions on $z$:
%
%e31 #&#
%
\begin{eqnarray}
\label{ZFLOW} \Phi_{(t,u]}^-\circ\Phi_{(s,t]}^-\le
\Phi_{(s,u]}^-\le\Phi_{(s,u]}^+\le\Phi_{(t,u]}^+\circ
\Phi_{(s,t]}^+,
\nonumber
\\[-8pt]
\\[-8pt]
 \eqntext{s,t,u\in\Q, s<t<u}
\end{eqnarray}
and
%
%\begin{itemize}
%%
%\item[]
for all $\ve>0$ and all $n\in\N$, there exist ${\delta}>0$,
$m\in\Z^+$
and $u_1,\ldots,u_m\in(-n,n)$ such that
%\end{itemize}
%
%e32 #&#
%
\begin{equation}
\label{ZCADLAG} \|\Phi_{(s,t]}-\id\|<\ve
\end{equation}
%
%\begin{itemize}
%%
%\item[]
whenever $s,t\in\Q\cap(-n,n)$ with $0<t-s<{\delta}$ and
$(s,t]\cap\{
u_1,\ldots,u_m\}=\es$.
%\end{itemize}
%

Note that the inequalities between functions required in (\ref{ZFLOW})
hold whenever the same inequalities
hold between their restrictions to $\Q$, by left and\vadjust{\goodbreak} right continuity.
Note also that condition (\ref{ZCADLAG}) is equivalent to the following
condition involving quantifiers only over countable sets:
\begin{itemize}
\item[]for all rationals $\ve>0$ and all $n\in\N$, there exist a
rational ${\delta}>0$ and an $m\in\Z^+$ such that, for all rationals
$\eta>0$,
there exist rationals $s_1,t_1,\ldots,s_m,t_m\in(-n,n)$, with $s_i<t_i$
for all $i$ and with
$\sum_{i=1}^m(t_i-s_i)<\eta$, such that
%\end{itemize}
%
\[
\|\Phi_{(s,t]}-\id\|<\ve
\]
%
%\begin{itemize}
%
whenever $s,t\in\Q\cap(-n,n)$ with $0<t-s<{\delta}$ and
$(s,t]\cap
((s_1,t_1]\cup\cdots\cup(s_m,t_m])=\es$.
\end{itemize}
Denote by $D_E^{*,+}$ the set of those $z\in D_E$ where conditions
(\ref
{ZDEG1}), (\ref{ZRIGHT}), (\ref{ZFLOW}) and (\ref{ZCADLAG})
all hold. Then $D^{*,+}_E$ is a measurable subset of $D_E$. Fix $z\in
D^{*,+}_E$. Given a bounded interval $I$, we can find
sequences of rationals $s_n$ and $t_n$ such that $(s_n,t_n]\to I$ as
$n\to\infty$. Then, by conditions (\ref{ZFLOW}) and (\ref{ZCADLAG}),
\[
d_\cD(\Phi_{(s_n,t_n]},\Phi_{(s_m,t_m]})\le\|
\Phi_{(s_n,s_m]}-\id\|+\| \Phi_{(t_n,t_m]}-\id\|\to0
\]
as $n,m\to\infty$. So the sequence $\Phi_{(s_n,t_n]}$ converges in
$\cD
$, with limit $\Phi_I$, say, and $\Phi_I$ does not
depend on the approximating sequences of rationals.
In the case where $I=I_1\oplus I_2$, there exists another sequence of
rationals $u_n$ such that $(s_n,u_n]\to I_1$
and $(u_n,t_n]\to I_2$ as $n\to\infty$. Hence, $\Phi=(\Phi_I\dvtx
I\sse\R)$
has the weak flow property, by Proposition~\ref{WFL}. It is
straightforward to deduce from (\ref{ZCADLAG}) that
$\Phi$ is moreover cadlag, so $\Phi=\Phi(z)\in D^\circ(\R,\cD)$.
It follows from its construction and the preceding proposition that the
map $z\mapsto\Phi(z)\dvtx D^{*,+}_E\to D^\circ(\R,\cD)$ is measurable.

Now, for all $z\in D^{*,+}_E$, we have $Z^{E,+}(\Phi(z))=z$ and for all
$\phi\in D^\circ(\R,\cD)$, we have $Z^{E,+}(\phi)\in D^{*,+}_E$ and
$\Phi(Z^{E,+}(\phi))=\phi$. Hence, $D_E^{\circ,+}=D^{*,+}_E$ and
$Z^{E,+}\dvtx D^\circ(\R,\cD)\to D_E^{\circ,+}$ is a bijection with
inverse $\Phi^{E,+}=\Phi$.

Consider now for $z\in D_E$ the condition
%
%e33 #&#
%
\begin{equation}
\label{ZLEFT} z_t^{(s,x)}=\sup_{y\in\Q,y<x}z_t^{(s,y)},\qquad
(s,x)\in E, t\in\Q, t>s.
\end{equation}
Denote by $D_E^{*,-}$ the set of those $z\in D_E$ where conditions
(\ref
{ZDEG1}), (\ref{ZFLOW}), (\ref{ZCADLAG}) and
(\ref{ZLEFT}) all hold, and define $\Phi$ on $D^{*,-}_E$ exactly as on
$D^{*,+}_E$.
Then, by a similar argument, $D_E^{\circ,-}=D^{*,-}_E$ and
$Z^{E,-}\dvtx D^\circ(\R,\cD)\to D_E^{\circ,-}$ is a bijection with
inverse \mbox{$\Phi^{E,-}=\Phi$}.
In particular,
$\Phi^{E,+}=\Phi^{E,-}$ on $D_E^{\circ,-}\cap D_E^{\circ,+}$ and so
$D_E^\circ=D_E^{\circ,-}\cap D_E^{\circ,+}$, as claimed.
\end{pf}

%pr9.10 #&#
%
\begin{propositionn}\label{MPI}
Let $E$ be a countable subset of $\R^2$ containing $\Q^2$.
Then $\mu_E(C^\circ_E)=1$.
\end{propositionn}

\begin{pf}
We use an identification of $C^\circ_E$ analogous to that implied for
$D^\circ_E$ by the preceding proof.
The same five conditions (\ref{ZDEG1}), (\ref{ZRIGHT}), (\ref{ZFLOW}),
(\ref{ZCADLAG}) and (\ref{ZLEFT}) characterize
$C^\circ_E$ inside $C_E$, except that, in (\ref{ZCADLAG}), only the
case $m=0$ is allowed.
Recall that, under $\mu_E$, for time--space starting points $e=(s,x)$
and $e'=(s',x')$,
the coordinate processes $Z^e$ and $Z^{e'}$ behave as independent
Brownian motions up to
\[
T^{ee'}=\inf\bigl\{t\ge s\vee s'\dvtx
Z_t^e-Z_t^{e'}\in\Z\bigr\},
\]
after which they continue to move as Brownian motions, but now with a
constant separation.
In particular, if $s=s'$ and $x'=x+n$ for some $n\in\Z$, then
$T^{ee'}=0$, so $Z^{e'}_t=Z^e_t+n$
for all $t\ge s$, so (\ref{ZDEG1}) holds almost surely.

Let $(s,x)\in E$ and $t,u\in\Q$, with $s\le t<u$.
Consider the event
\[
A= \Bigl\{\sup_{y\in\Q,y<Z_t^{(s,x)}}Z_u^{(t,y)}=Z_u^{(s,x)}=
\inf_{y'\in
\Q,y'>Z_t^{(s,x)}}Z_u^{(t,y')} \Bigr\}.
\]
Fix $n\in\N$ and set $Y=n^{-1}\lfloor nZ_t^{(s,x)}\rfloor$ and $Y'=Y+1/n$.
Then $Y$ and $Y'$ are $\cF_t$-measurable, $\Q$-valued random variables.
Now $\PP(Y<Z_t^{(s,x)}<Y')=1$ and
\[
\bigl\{Y<Z_t^{(s,x)}<Y'\bigr\}\cap\bigl
\{T^{(t,Y)(t,Y')}\le u\bigr\}\sse A.
\]
By the Markov property of Brownian motion, almost surely,
\[
\PP\bigl(T^{(t,Y)(t,Y')}\le u|\cF_t\bigr)\ge2\Phi\biggl(
\frac{1}{n\sqrt{2(u-t)}} \biggr),
\]
and the right-hand side tends to $1$ as $n\to\infty$.
So, by bounded convergence, we obtain $\PP(A)=1$.
On taking a countable intersection of such sets $A$ over the possible values
of $s,x,t$ and $u$, we deduce that
conditions (\ref{ZRIGHT}), (\ref{ZFLOW}) and (\ref{ZLEFT}) hold
almost surely.

It remains to establish the continuity condition (\ref{ZCADLAG}).
For a standard Brownian motion $B$ starting from $0$, we have, for
$n\ge4$,
\[
\PP\Bigl(\sup_{t\le1}|B_t|>n \Bigr)\le
e^{-n^2/2}.
\]
Define, for ${\delta}>0$ and $e=(s,x)\in E$,
\[
V^e({\delta})=\sup_{s\le t\le s+{\delta}^2}\bigl|Z^e_t-x\bigr|.
\]
Then, by scaling,
\[
\PP\bigl(V^e({\delta})>n{\delta}\bigr)\le e^{-n^2/2}.
\]
Consider, for each $n\in\N$ the set
\[
E_n=\bigl\{\bigl(j2^{-2n},k2^{-n}\bigr)\dvtx j
\in\tfrac12\Z\cap\bigl[-2^{2n},2^{2n}\bigr),k=0,1,\ldots
,2^n-1\bigr\}
\]
and the event
\[
A_n=\bigcup_{e\in E_n}\bigl
\{V^e\bigl(2^{-n}\bigr)>n2^{-n}\bigr\}.
\]
Then $\PP(A_n)\le|E_n|e^{-n^2/2}$, so $\sum_n\PP(A_n)<\infty$, so by
Borel--Cantelli, almost surely,
there is a random $N<\infty$ such that $V^e(2^{-n})\le n2^{-n}$ for all
$e\in E_n$, for all $n\ge N$.

Given $\ve>0$, choose $n\ge N$ such that $(4n+2)2^{-n}\le\ve$ and set
${\delta}=2^{-2n-1}$. Then, for all
rationals $s,t\in(-n,n)$ with $0<t-s<{\delta}$ and all rationals
$x\in[0,1]$,
there exist $e^\pm=(r,y^\pm)\in E_n$ such that
\begin{eqnarray*}
 r&\le& s<t\le r+2^{-2n},
\\
 x+n2^{-n}&<&y^+\le x+(n+1)2^{-n},
\\
 x-(n+1)2^{-n}&\le& y^-<x-n2^{-n}.
\end{eqnarray*}
Then, $Z^{e^-}_s<x<Z_s^{e^+}$, so
\[
x-\ve\le Z^{e^-}_t\le Z_t^{(s,x)}\le
Z_t^{e^+}\le x+\ve.
\]
Hence, $\|\Phi_{(s,t]}-\id\|\le\ve$, as required.
\end{pf}

Recall that $\Phi^E$ denotes the inverse of the evaluation map $Z^E
\dvtx D^\circ(E) \to D^\circ_E$.

%pr9.11 #&#
%
\begin{propositionn}
\label{PCP}
Let $E$ be a countable subset of $\R^2$ containing $\Q^2$. Then $\Phi
^E$ is continuous.
\end{propositionn}

\begin{pf}
Consider a sequence $(z_k\dvtx k\in\N)$ in $D^\circ_E$ and suppose that
$z_k\to z$ in $D_E$, with $z\in D^\circ_E$.
Set $\phi^k=\Phi^E(z_k)$ and $\phi=\Phi^E(z)$. By analogy with the
standard Skorohod topology,
it will suffice to show that, for all $n_0\in\N$ and all continuity
points $-n_0<t<n_0$, that is, $\phi_{\{t\}}=\id$, we have $\sup
_{-n_0<s<t} d_\cD(\phi_{(s,t]}^k,\break\phi_{(s,t]})\to0$ as $k\to\infty$.
Given $\ve>0$, choose $0< \eta< \ve/3$. As in the proof of
separability in Proposition~\ref{compsep}, there exist $m,n \in\N$ and
discontinuity points $-n_0=u_0< u_1 < \cdots< u_n=n_0$ with $2/m + 3
\eta< \ve$ such that if $I \cap\{u_0, \ldots, u_n \}$ with $\sup I -
\inf I < 2/m$, then $\|\phi_I-\id\|<\eta$. Consider the finite set
\[
F=\bigl(m^{-1}\Z\cap[-n_0,n_0]\bigr)\times
\bigl(m^{-1}\Z\cap[0,1)\bigr).
\]
There exists a $K<\infty$ such that, for all $k\ge K$ and all
$e_0=(s_0,x_0)\in F$, $d_{e_0}(z_k^{e_0}, z^{e_0}) < 1/m$.
Therefore, there exists some homomorphism of $(s_0, \infty)$, $\lambda
(= \lambda_{k, e_0})$, such that for all $t\in(s_0,n_0]|\lambda
(t)-t|<1/m$ and
\[
\bigl|\phi^{k,+}_{(s_0,t]}(x_0)-\phi_{(s_0, \lambda(t)]}^+(x_0)\bigr|=\bigl|
\phi^{k,-}_{(s_0,t]}(x_0)-\phi_{(s_0, \lambda(t)]}^-(x_0)\bigr|<1/m.
\]
For all $s\in[-n_0,n_0)$ and all $x\in[0,1)$, there exists
$(s_0,x_0)\in F$ such that
\[
s_0\le s<s_0+1/m, \qquad x_0\le x+
\eta+2/m<x_0+1/m.
\]
Then
\[
\phi^{k,+}_{(s_0,s]}(x_0)\ge\phi^+_{(s_0,\lambda(s)]}(x_0)-1/m
\ge x_0-\eta-1/m>x,
\]
so
\[
\phi^{k,+}_{(s_0,t]}(x_0)\ge\phi^{k,+}_{(s,t]}(x),\qquad
t\ge s.
\]
Now, for all $t\in(s,n_0]$ with $|t - u_l| > 1/m$ for all $l \in\{0,
\ldots, n\}$,
we have $d_\cD(\phi_{(s_0, \lambda(t)]}, \phi_{(s,t]}) < 2 \eta$, so
\[
\phi^+_{(s_0, \lambda(t)]}(x_0) \leq\phi^+_{(s,t]}(x_0
+ 2 \eta) + 2 \eta.
\]
So,
\[
\phi_{(s_0,t]}^{k,+}(x_0)\le\phi_{(s_0, \lambda(t)]}^+(x_0)+1/m
\le\phi^+_{(s,t]}(x_0 + 2 \eta) + 2\eta+ 1/m,
\]
and so
\[
\phi_{(s,t]}^{k,+}(x)\le\phi_{(s,t]}^+(x+\ve)+\ve.
\]
By a similar argument, for all $t\in(s,n_0]$ with $|t - u_l| > 1/m$ for
all $l \in\{0, \ldots, n\}$,
\[
\phi_{(s,t]}^{k,-}(x)\ge\phi_{(s,t]}^-(x-\ve)-\ve,
\]
so $d_\cD(\phi^k_{(s,t]},\phi_{(s,t]})\le\ve$. As $1/m$ can be chosen
to be arbitrarily small, the result follows.
\end{pf}

%s9.3 #&#
\subsection{List of notation}
For ease of reference, we list below some of the notation that appears
in the paper. In all definitions, $e=(s,x)\in\R^2$,
$E=(e_k=(s_k,x_k)\dvtx k\in\N)$ in $\R^2$, $\ve\in(0,1]$ and disturbance
flows are with disturbance~$f$. \vspace*{15pt}

{\hspace*{-14pt}
\begin{tabular*}{\textwidth}{@{\extracolsep{\fill}}p{2.6cm}p{9cm}@{}}
%\begin{tabular}{@{}lp{9cm}@{}}
&{\textbf{Disturbance flows}}: \\
$\Phi_{n,m}$: & The discrete disturbance flow in which disturbances are
applied at integer times.\\
$\Phi$:& The lattice disturbance flow, in which the disturbances are
applied at times in the lattice $\Z/\rho$, or the Poisson disturbance
flow, in which the disturbances are applied at the times of the atoms
of a Poisson process with intensity $\rho$. \\
$\hat\Phi$:& The time reversed disturbance flow given by $\hat\Phi
_I=\Phi_{-I}^{-1}$. \\
$\Phi^\ve$:& The $\ve$-scale disturbance flow, that is, $\Phi
_I^\ve=\s_\ve
(\Phi_{\ve^2 I})$.\\
$Z^{e,\pm}$:& The evaluation maps $Z^{e,\pm}\dvtx D^\circ(\R,\cD)
\to D_e$
given by $Z^{e,\pm}_t(\phi)=\phi_{(s,t]}^\pm(x)$.\\
$Z^{E,\pm}$:& The evaluation maps $Z^{E,\pm}\dvtx D^\circ(\R,\cD
)\to D_E$
given by $Z^{E,\pm}(\phi)=(Z^{e_k,\pm}(\phi)\dvtx k\in\N)$. \\
$\check Z^{e,\pm}(\phi)$:& The extension of the evaluation maps from
$[s,\infty)$ to the whole of $\R$.\\
\end{tabular*}}

{\hspace*{-14pt}
\begin{tabular*}{\textwidth}{@{\extracolsep{\fill}}p{2.6cm}p{9cm}@{}}
$\Phi^{E,\pm}$:& The inverse of $Z^{E,\pm}$ restricted to $D^{\circ
,\pm
}_E$. \\
$\Phi^{E}$:& The inverse of $Z^{E,+}$ (or identically $Z^{E,-}$)
restricted to $D^{\circ}_E$.\\\\

&{\textbf{Metric spaces}}: \\
$(\cD,d_\cD)$:& The set of disturbances on the circle together with
the metric defined in \eqref{distdist}. \\
$(\bar\cD,d_{\bar\cD})$:& The space of disturbances on the line
together with the metric defined in \eqref{ddbardef}.\\
$\cD^*$:& $\cD^*= \{f\in\cD\sm\{\id\}\dvtx\int_0^1(f(x)-x) \,
dx=0 \}
$. \\
$(D(\R, S), d)$:& The Skorohod space of cadlag paths in a metric space
$S$, equipped with $d$, the Skorokhod metric on $D(\R, S)$. \\
$(D_e, d_e)$:& $D_e=D_x([s,\infty),\R)$ is the Skorohod space of
cadlag paths starting from $x$ at time $s$, equipped with $d_e$, the
Skorokhod metric on $D_e$.
\\
%\end{tabular*}}
%
%{\hspace*{-14pt}
%\begin{tabular*}{\textwidth}{@{\extracolsep{\fill}}p{2.6cm}p{9cm}@{}}
$(D_E,d_E)$:& $D_E=\prod_{k=1}^\infty D_{e_k}$ and $d_E$ is the metric
on $D_E$ defined in \eqref{DEmetric}. \\
$\check D_e$:& $\check D_e=\{\xi\in D(\R,\R)\dvtx\xi_s=x\}$. \\
$\check D_E$:& $\check D_E=\prod_{k=1}^\infty\check D_{e_k}$. \\
$C_e$:& The subspace of $D_e$ consisting of continuous paths.\\
$C_E$:& The subspace of $D_E$ where each coordinate path is continuous,
that is, $C_E=\prod_{k=1}^\infty C_{e_k}$.\\
$(C^\circ(\R,\cD), d_C)$:& The set of continuous weak flows on the
circle with values in $\cD$ together with the metric defined in \eqref
{dcdist}. \\
$C^\circ(\R,\bar\cD)$:& The space of continuous weak flows on the
line. \\
\end{tabular*}}

\hspace*{-14pt}
\begin{tabular*}{\textwidth}{@{\extracolsep{\fill}}p{2.6cm}p{9cm}@{}}
$d_C^{(n)}$:& The semimetric on $C^\circ(\R,\cD)$, restricted to time
taking values in $(-n,n)$, as defined in \eqref{dcndist}. \\
$C^{\circ,\pm}_E$:& The subspace of $C_E$ given by $C^{\circ,\pm
}_E=\{
Z^{E,\pm}(\phi)\dvtx\phi\in C^\circ(\R,\cD)\}$. \\
$C^\circ(E)$:& $C^\circ(E)=\{\phi\in C^\circ(\R,\cD)\dvtx
Z^{E,+}(\phi
)=Z^{E,-}(\phi)\}$. \\
$C^\circ_E$: & $C^\circ_E=\{Z^E(\phi)\dvtx\phi\in C^\circ(E)\}$.
\\
$(D^\circ(\R,\cD), d_D)$:& The set of cadlag weak flows with values in
$\cD$ together with the metric defined in \eqref{SMET}. \\
$D^\circ(\R,\bar\cD)$:& The space of cadlag weak flows on the
line. \\
$d_D^{(n)}$:& The semimetric on $D^\circ(\R,\cD)$, on a restricted
time-interval, as defined in \eqref{ddndist}. \\
$D^{\circ,\pm}_E$:& The subspace of $D_E$ given by $D^{\circ,\pm
}_E=\{
Z^{E,\pm}(\phi)\dvtx\phi\in D^\circ(\R,\cD)\}$. \\
$D^\circ(E)$:& $D^\circ(E)=\{\phi\in D^\circ(\R,\cD)\dvtx
Z^{E,+}(\phi
)=Z^{E,-}(\phi)\}$. \\
$D^\circ_E$:& $D^\circ_E=\{Z^E(\phi)\dvtx\phi\in D^\circ(E)\}$.\\\\
\end{tabular*}

\hspace*{-14pt}
\begin{tabular*}{\textwidth}{@{\extracolsep{\fill}}p{2.6cm}p{9cm}@{}}
&{\textbf{Distributions}}: \\\
$\mu_e$:& The distribution on the Skorohod space $D_e$ of a standard
Brownian motion starting from $e$.\\
$\mu^f_e$:& The distribution on $D_e$ of the process $(\Phi
_{(s,t]}(x))_{t \geq s}$. \\
$\mu_E$, $\bar\mu_E$:& The distribution on $D_E$ (or $C_E$) of a
sequence of coalescing Brownian motions on the circle, respectively on
the line, starting from $E$.\\
$\mu^f_E$, $\mu^{f,\ve}_E$:& The distributions on $D_E$ of $(\Phi
_{(s_k, \cdot]}(x_k)\dvtx k\in\N)$, $(\Phi^\ve_{(s_k, \cdot
]}(x_k)\dvtx k\in\N
)$, respectively. \\
$\mu_A$, $\bar\mu_A$:& The distribution on $C^\circ(\R,\cD)$,
respectively on $C^\circ(\R,\bar\cD)$, of the coalescing Brownian flow
on the circle, respectively on the line. \\
$\mu^f_A$, $\hat\mu_A^f$:& The distributions on $D^\circ(\R,\cD
)$ of
$\Phi$, $\hat\Phi$, respectively. \\
$\check\mu_E^f$:& The law on $\check D_E$ of $(\check Z^{e_k}\dvtx
k\in\N)$
under $\mu_A^f$. \\
$\check\mu_E$:& The law on $\check D_E$ of $(\check Z^{e_k}\dvtx
k\in\N)$
under $\mu_A$. \\
\end{tabular*}
\end{appendix}
\section*{Acknowledgements}
We are grateful to Tom Ellis for helpful suggestions during the writing
of this paper. We would also like to thank an anonymous referee for a
careful reading of our manuscript and many useful comments.
%\end{acknowledgement}

%\def\cprime{$'$}
%
% imsref loaded by akundreckaite, 2014-01-14 13:14:49
%

%
%\begin{appendix}
%\section{}
%\end{appendix}

% zodis "Acknowledgments" paliekamas pagal autoriu
%\section*{Acknowledgments}

%\begin{supplement}[id=suppA]
%\sname{Supplement A}
%\stitle{}
%\slink[doi]{10.1214/00-AOPXXXXSUPP} %[doi,text={...}] - jei reikia
%suskaldyti doi
%\sdatatype{.pdf}
%\sfilename{aopXXXX\_supp.pdf}
%\sdescription{}
%\end{supplement}

%\begin{thebibliography}{99}
%\bibitem[\protect\citeauthoryear{}{}]{r1}
%\bibitem{r1}
%\end{thebibliography}

\printaddresses


\begin{thebibliography}{19}
% Style name=ims, version=2.7, label_style=nolabel,
%sorting_style=complex, cfg=None, language=None.

%b1 ###
%b1 #&#
\bibitem{A79}
%
\begin{bmisc}[mr]
\bauthor{\bsnm{Arratia},~\bfnm{Richard~Alejandro}\binits{R.~A.}}
(\byear{1979}).
\bhowpublished{C{oalescing} {Brownian} {motions} {on} {the} {line}.
Ph.D. thesis, Univ. Wisconsin, Madison, ProQuest LLC, Ann Arbor, MI.}
\bid{mr={2630231}}
\end{bmisc}
%
\bptok{imsref}%
% NOT OUTPUTED:
% fpage = 134
\endbibitem

%b2 ###
%b2 #&#
\bibitem{B}
%
\begin{bbook}[mr]
\bauthor{\bsnm{Billingsley},~\bfnm{Patrick}\binits{P.}}
(\byear{1999}).
\btitle{Convergence of Probability Measures},
\bedition{2nd} ed.
%\bseries{Wiley Series in Probability and Statistics: Probability and
%Statistics}.
\bpublisher{Wiley},
\blocation{New York}.
%\bnote{A Wiley-Interscience Publication}.
\bid{doi={10.1002/9780470316962}, mr={1700749}}
\end{bbook}
%
\bptok{imsref}%
% NOT OUTPUTED:
% isbn = 0-471-19745-9
% url = http://dx.doi.org/10.1002/9780470316962
% fpage = x+277
\endbibitem

%b3 ###
%b3 #&#
\bibitem{MR2903755}
%
\begin{barticle}[mr]
\bauthor{\bsnm{Calegari},~\bfnm{Danny}\binits{D.}} \AND
\bauthor{\bsnm{Walker},~\bfnm{Alden}\binits{A.}}
(\byear{2011}).
\btitle{Ziggurats and rotation numbers}.
\bjournal{J. Mod. Dyn.}
\bvolume{5}
\bpages{711--746}.
\bid{issn={1930-5311}, mr={2903755}}
\end{barticle}
%
\bptok{imsref}%
% NOT OUTPUTED:
% issn = 1930-5311
% number = 4
% fjournal = Journal of Modern Dynamics
\endbibitem

%b4 ###
%b4 #&#
\bibitem{MR1048930}
%
\begin{barticle}[mr]
\bauthor{\bsnm{Cox},~\bfnm{J.~T.}\binits{J.~T.}}
(\byear{1989}).
\btitle{Coalescing random walks and voter model consensus times on the
torus in~{$\mathbf{Z}^d$}}.
\bjournal{Ann. Probab.}
\bvolume{17}
\bpages{1333--1366}.
\bid{issn={0091-1798}, mr={1048930}}
\end{barticle}
%
\bptok{imsref}%
% NOT OUTPUTED:
% issn = 0091-1798
% url =
%http://links.jstor.org/sici?sici=0091-1798(198910)17:4&lt;1333:CRWAVM&gt;2.0.CO;2-#&origin=MSN
% number = 4
% coden = APBYAE
% fjournal = The Annals of Probability
\endbibitem

%b5 ###
%b5 #&#
\bibitem{TE}
%
\begin{bmisc}[auto:STB|2014/01/06|10:16:28]
\bauthor{\bsnm{Ellis},~\bfnm{Tom}\binits{T.}}
(\byear{2010}).
\bhowpublished{Coalescing stochastic flows driven by Poisson
random measure and convergence to the Brownian web.
Ph.D. thesis, Univ. Cambridge.}
\end{bmisc}
%
\bptok{imsref}%
\endbibitem

%b6 ###
%b6 #&#
\bibitem{FN}
%
\begin{barticle}[mr]
\bauthor{\bsnm{Fontes},~\bfnm{Luiz~Renato}\binits{L.~R.}} \AND
\bauthor{\bsnm{Newman},~\bfnm{Charles~M.}\binits{C.~M.}}
(\byear{2006}).
\btitle{The full {B}rownian web as scaling limit of stochastic flows}.
\bjournal{Stoch. Dyn.}
\bvolume{6}
\bpages{213--228}.
\bid{doi={10.1142/S0219493706001724}, issn={0219-4937}, mr={2239090}}
\end{barticle}
%
\bptok{imsref}%
% NOT OUTPUTED:
% issn = 0219-4937
% url = http://dx.doi.org/10.1142/S0219493706001724
% number = 2
% fjournal = Stochastics and Dynamics
\endbibitem

%b7 ###
%b7 #&#
\bibitem{FINR}
%
\begin{barticle}[mr]
\bauthor{\bsnm{Fontes},~\bfnm{L.~R.~G.}\binits{L.~R.~G.}},
\bauthor{\bsnm{Isopi},~\bfnm{M.}\binits{M.}},
\bauthor{\bsnm{Newman},~\bfnm{C.~M.}\binits{C.~M.}} \AND
\bauthor{\bsnm{Ravishankar},~\bfnm{K.}\binits{K.}}
(\byear{2004}).
\btitle{The {B}rownian web: Characterization and convergence}.
\bjournal{Ann. Probab.}
\bvolume{32}
\bpages{2857--2883}.
\bid{doi={10.1214/009117904000000568}, issn={0091-1798}, mr={2094432}}
\end{barticle}
%
\bptok{imsref}%
% NOT OUTPUTED:
% issn = 0091-1798
% url = http://dx.doi.org/10.1214/009117904000000568
% number = 4
% coden = APBYAE
% fjournal = The Annals of Probability
\endbibitem

%b8 ###
%b8 #&#
\bibitem{MR1876932}
%
\begin{barticle}[mr]
\bauthor{\bsnm{Ghys},~\bfnm{{\'E}tienne}\binits{{\'E}.}}
(\byear{2001}).
\btitle{Groups acting on the circle}.
\bjournal{Enseign. Math. (2)}
\bvolume{47}
\bpages{329--407}.
\bid{issn={0013-8584}, mr={1876932}}
\end{barticle}
%
\bptok{imsref}%
% NOT OUTPUTED:
% issn = 0013-8584
% number = 3-4
% coden = ENMAAR
% fjournal = L'Enseignement Math\'ematique. Revue Internationale. IIe S
%\'erie
\endbibitem

%b9 ###
%b9 #&#
\bibitem{HL}
%
\begin{barticle}[auto:STB|2014/01/06|10:16:28]
\bauthor{\bsnm{Hastings},~\bfnm{M.~B.}\binits{M.~B.}} \AND
\bauthor{\bsnm{Levitov},~\bfnm{L.~S.}\binits{L.~S.}}
(\byear{1998}).
\btitle{Laplacian growth as one-dimensional turbulence}.
\bjournal{Phys. D}
\bvolume{116}
\bpages{244}.
\end{barticle}
%
\bptok{imsref}%
% NOT OUTPUTED:
% number = 1-2
\endbibitem

%b10 ###
%b10 #&#
\bibitem{MR1943877}
%
\begin{bbook}[mr]
\bauthor{\bsnm{Jacod},~\bfnm{Jean}\binits{J.}} \AND
\bauthor{\bsnm{Shiryaev},~\bfnm{Albert~N.}\binits{A.~N.}}
(\byear{2003}).
\btitle{Limit Theorems for Stochastic Processes},
\bedition{2nd} ed.
\bseries{Grundlehren der Mathematischen Wissenschaften}
\bvolume{288}.
\bpublisher{Springer},
\blocation{Berlin}.
\bid{mr={1943877}}
\end{bbook}
%
\bptok{imsref}%
% NOT OUTPUTED:
% isbn = 3-540-43932-3
% fpage = xx+661
\endbibitem

%b11 ###
%b11 #&#
\bibitem{LJR}
%
\begin{barticle}[mr]
\bauthor{\bparticle{Le} \bsnm{Jan},~\bfnm{Yves}\binits{Y.}} \AND
\bauthor{\bsnm{Raimond},~\bfnm{Olivier}\binits{O.}}
(\byear{2004}).
\btitle{Flows, coalescence and noise}.
\bjournal{Ann. Probab.}
\bvolume{32}
\bpages{1247--1315}.
\bid{doi={10.1214/009117904000000207}, issn={0091-1798}, mr={2060298}}
\end{barticle}
%
\bptok{imsref}%
% NOT OUTPUTED:
% issn = 0091-1798
% url = http://dx.doi.org/10.1214/009117904000000207
% number = 2
% coden = APBYAE
% fjournal = The Annals of Probability
\endbibitem

%b12 ###
%b12 #&#
\bibitem{MR1713340}
%
\begin{barticle}[mr]
\bauthor{\bsnm{Malliavin},~\bfnm{Paul}\binits{P.}}
(\byear{1999}).
\btitle{The canonic diffusion above the diffeomorphism group of the circle}.
\bjournal{C. R. Acad. Sci. Paris S\'er. I Math.}
\bvolume{329}
\bpages{325--329}.
\bid{doi={10.1016/S0764-4442(00)88575-4}, issn={0764-4442}, mr={1713340}}
\end{barticle}
%
\bptok{imsref}%
% NOT OUTPUTED:
% issn = 0764-4442
% url = http://dx.doi.org/10.1016/S0764-4442(00)88575-4
% number = 4
% coden = CASMEI
% fjournal = Comptes Rendus de l'Acad\'emie des Sciences. S\'erie I.
%Math\'ematique
\endbibitem

%b13 ###
%b13 #&#
\bibitem{MR2084334}
%
\begin{barticle}[mr]
\bauthor{\bsnm{Navas},~\bfnm{Andr{\'e}s}\binits{A.}}
(\byear{2004}).
\btitle{Sur les groupes de diff\'eomorphismes du cercle engendr\'es
par des \'el\'ements proches des rotations}.
\bjournal{Enseign. Math. (2)}
\bvolume{50}
\bpages{29--68}.
\bid{issn={0013-8584}, mr={2084334}}
\end{barticle}
%
\bptok{imsref}%
% NOT OUTPUTED:
% issn = 0013-8584
% number = 1-2
% coden = ENMAAR
% fjournal = L'Enseignement Math\'ematique. Revue Internationale. IIe S
%\'erie
\endbibitem

%b14 ###
%b14 #&#
\bibitem{NT}
%
\begin{bmisc}[auto:STB|2014/01/06|10:16:28]
\bauthor{\bsnm{Norris},~\bfnm{James}\binits{J.}} \AND
\bauthor{\bsnm{Turner},~\bfnm{Amanda}\binits{A.}}
(\byear{2008}).
\bhowpublished{Planar aggregation and the coalescing Brownian flow.
Available at \arxivurl{arXiv:0810.0211} [math.PR].}
\end{bmisc}
%
\bptok{imsref}%
% NOT OUTPUTED:
% sortkey = Norris(2008
\endbibitem

%b15 ###
%b15 #&#
\bibitem{NT2}
%
\begin{barticle}[mr]
\bauthor{\bsnm{Norris},~\bfnm{James}\binits{J.}} \AND
\bauthor{\bsnm{Turner},~\bfnm{Amanda}\binits{A.}}
(\byear{2012}).
\btitle{Hastings--{L}evitov aggregation in the small-particle limit}.
\bjournal{Comm. Math. Phys.}
\bvolume{316}
\bpages{809--841}.
\bid{doi={10.1007/s00220-012-1552-6}, issn={0010-3616}, mr={2993934}}
\end{barticle}
%
\bptok{imsref}%
% NOT OUTPUTED:
% issn = 0010-3616
% url = http://dx.doi.org/10.1007/s00220-012-1552-6
% number = 3
% coden = CMPHAY
% fjournal = Communications in Mathematical Physics
\endbibitem

%b16 ###
%b16 #&#
\bibitem{STW}
%
\begin{barticle}[mr]
\bauthor{\bsnm{Soucaliuc},~\bfnm{Florin}\binits{F.}},
\bauthor{\bsnm{T{\'o}th},~\bfnm{B{\'a}lint}\binits{B.}} \AND
\bauthor{\bsnm{Werner},~\bfnm{Wendelin}\binits{W.}}
(\byear{2000}).
\btitle{Reflection and coalescence between independent one-dimensional
{B}rownian paths}.
\bjournal{Ann. Inst. Henri Poincar\'e Probab. Stat.}
\bvolume{36}
\bpages{509--545}.
\bid{doi={10.1016/S0246-0203(00)00136-9}, issn={0246-0203}, mr={1785393}}
\end{barticle}
%
\bptok{imsref}%
% NOT OUTPUTED:
% issn = 0246-0203
% url = http://dx.doi.org/10.1016/S0246-0203(00)00136-9
% number = 4
% coden = AHPBAR
% fjournal = Annales de l'Institut Henri Poincar\'e. Probabilit\'es et
%Statistiques
\endbibitem

%b17 ###
%b17 #&#
\bibitem{TW}
%
\begin{barticle}[mr]
\bauthor{\bsnm{T{\'o}th},~\bfnm{B{\'a}lint}\binits{B.}} \AND
\bauthor{\bsnm{Werner},~\bfnm{Wendelin}\binits{W.}}
(\byear{1998}).
\btitle{The true self-repelling motion}.
\bjournal{Probab. Theory Related Fields}
\bvolume{111}
\bpages{375--452}.
\bid{doi={10.1007/s004400050172}, issn={0178-8051}, mr={1640799}}
\end{barticle}
%
\bptok{imsref}%
% NOT OUTPUTED:
% issn = 0178-8051
% url = http://dx.doi.org/10.1007/s004400050172
% number = 3
% coden = PTRFEU
% fjournal = Probability Theory and Related Fields
\endbibitem

%b18 ###
%b18 #&#
\bibitem{Tsirelson}
%
\begin{barticle}[mr]
\bauthor{\bsnm{Tsirelson},~\bfnm{Boris}\binits{B.}}
(\byear{2004}).
\btitle{Nonclassical stochastic flows and continuous products}.
\bjournal{Probab. Surv.}
\bvolume{1}
\bpages{173--298}.
\bid{doi={10.1214/154957804100000042}, issn={1549-5787}, mr={2068474}}
\end{barticle}
%
\bptok{imsref}%
% NOT OUTPUTED:
% issn = 1549-5787
% url = http://dx.doi.org/10.1214/154957804100000042
% fjournal = Probability Surveys
\endbibitem

%b19 ###
%b19 #&#
\bibitem{Zhou}
%
\begin{barticle}[mr]
\bauthor{\bsnm{Zhou},~\bfnm{Xiaowen}\binits{X.}}
(\byear{2008}).
\btitle{Stepping-stone model with circular {B}rownian migration}.
\bjournal{Canad. Math. Bull.}
\bvolume{51}
\bpages{146--160}.
\bid{doi={10.4153/CMB-2008-017-2}, issn={0008-4395}, mr={2384748}}
\end{barticle}
%
\bptok{imsref}%
% NOT OUTPUTED:
% issn = 0008-4395
% url = http://dx.doi.org/10.4153/CMB-2008-017-2
% number = 1
% coden = CMBUA3
% fjournal = Canadian Mathematical Bulletin. Bulletin Canadien de Math
%\'ematiques
\endbibitem

\end{thebibliography}
\end{document}